\documentclass[11pt]{article}

\usepackage[letterpaper,margin=1in]{geometry}
\usepackage[T1]{fontenc}
\usepackage[utf8]{inputenc}
\usepackage{amsmath,amssymb,amsthm,mathtools}
\usepackage{newpxtext}
\usepackage{newpxmath}
\usepackage{microtype}
\usepackage[dvipsnames]{xcolor}
\usepackage{aliascnt}
\usepackage[hypertexnames=false]{hyperref}
\usepackage[nameinlink,noabbrev]{cleveref}

\hypersetup{
  colorlinks=true,
  linkcolor=NavyBlue,
  citecolor=ForestGreen,
  urlcolor=NavyBlue,
  pdftitle={The Balancing Theorem for Spanning Trees of Rectangular Grid Graphs},
  pdfauthor={}
}

\numberwithin{equation}{section}

\newtheorem{theorem}{Theorem}[section]

\newaliascnt{corollary}{theorem}
\newtheorem{corollary}[corollary]{Corollary}
\aliascntresetthe{corollary}

\newaliascnt{conjecture}{theorem}
\newtheorem{conjecture}[conjecture]{Conjecture}
\aliascntresetthe{conjecture}

\newaliascnt{lemma}{theorem}
\newtheorem{lemma}[lemma]{Lemma}
\aliascntresetthe{lemma}

\newaliascnt{proposition}{theorem}
\newtheorem{proposition}[proposition]{Proposition}
\aliascntresetthe{proposition}

\crefname{theorem}{Theorem}{Theorems}
\crefname{corollary}{Corollary}{Corollaries}
\crefname{conjecture}{Conjecture}{Conjectures}
\crefname{lemma}{Lemma}{Lemmas}
\crefname{proposition}{Proposition}{Propositions}
\Crefname{theorem}{Theorem}{Theorems}
\Crefname{corollary}{Corollary}{Corollaries}
\Crefname{conjecture}{Conjecture}{Conjectures}
\Crefname{lemma}{Lemma}{Lemmas}
\Crefname{proposition}{Proposition}{Propositions}

\DeclareMathOperator{\arcosh}{arcosh}
\DeclareMathOperator{\arsinh}{arsinh}
\newcommand{\Z}{\mathbb{Z}}
\newcommand{\dd}{\,d}

\title{\bfseries The Balancing Theorem for Spanning Trees of Rectangular Grid Graphs}
\author{Jiechen Zhang\footnote{EPFL\@. Email: \href{mailto:jiechen.zhang@epfl.ch}{jiechen.zhang@epfl.ch}.}}
\date{}

\begin{document}
\maketitle

\begin{abstract}
We compare spanning-tree counts of rectangular grid graphs at fixed area. The main result is a balancing theorem: if $AB=ab$ and $A\le a\le b\le B$, then the more balanced rectangle has larger spanning-tree count, and in fact
\[
  \log\tau(a,b)-\log\tau(A,B)
  \ge
  \operatorname{arsinh}(1)\bigl((A+B)-(a+b)\bigr).
\]
The universal constant $\operatorname{arsinh}(1)=\log(1+\sqrt{2})\approx0.88137$ is optimal. Thus the log-ratio between equal-area rectangles grows at least linearly in the reduction of the sum of side lengths. The proof starts from the Laplacian product formula, passes to hyperbolic coordinates, and gives an exact finite-size identity for the log-gain of a balancing move. A monotone trapezoidal estimate gives the optimal main term, while a positive monotone residual gives strictness. The same decomposition identifies the closest-to-square divisor rectangle as the unique fixed-area maximizer and recovers the bounded-aspect rectangular asymptotic through the logarithmic corner term, including the square-grid case.
\end{abstract}

\clearpage

\section{Introduction}

Let $P_r$ be the path graph on $r$ vertices, and let $R_{r,s}=P_r\square P_s$ be the $r\times s$ rectangular grid graph formed by their Cartesian product. Let $\tau(G)$ denote the number of spanning trees of a finite connected graph $G$, and abbreviate $\tau(r,s)=\tau(R_{r,s})$.

Spanning-tree counts are a classical graph invariant, and rectangular grids are among the basic finite pieces of the square lattice for which product formulas make exact finite-size questions accessible. The rectangular case isolates a natural finite-size comparison problem: among equal-area rectangles, does the more balanced shape have the larger spanning-tree count?

This question also has a motivation in political redistricting: in tree-weighted graph partition models, weights proportional to spanning-tree counts serve as a compactness-sensitive local statistic.

Procaccia and Tucker-Foltz record a broader square-maximality conjecture: among grid subgraphs with a fixed number of vertices, square subgrids should maximize the spanning-tree count~\cite{ProcacciaTuckerFoltz2022}. In the connected induced-subgraph formulation, let $\mathcal L$ be the infinite square-lattice graph with vertex set $\Z^2$.

\begin{conjecture}\label{conj:induced-grid}
Let $n\ge 1$ be an integer. Let $S\subset\Z^2$ be a finite set of vertices such that $|S|=n^2$, and suppose that the induced subgraph $\mathcal L[S]$ is connected. Then
\[
  \tau(\mathcal L[S])\le \tau(n,n).
\]
\end{conjecture}

We affirmatively answer the rectangular form of this question at the finite level. More precisely, for all equal-area rectangular grid graphs, balancing the side lengths increases the spanning-tree count, and we quantify the gain with an explicit optimal lower bound.

\begin{theorem}[Balancing theorem]\label{thm:balancing}
Let $A,B,a,b$ be positive integers satisfying
\[
  AB=ab,\qquad A\le a\le b\le B.
\]
Then
\[
  \log\tau(a,b)-\log\tau(A,B)
  \ge
  \arsinh(1)\bigl((A+B)-(a+b)\bigr).
\]
If the rectangles are distinct, the inequality is strict. The universal constant $\arsinh(1)=\log(1+\sqrt2)\approx0.88137$ is optimal.
\end{theorem}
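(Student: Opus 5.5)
We start from the Laplacian product formula. The Laplacian eigenvalues of $R_{r,s}$ are $\lambda_j+\mu_k$, where $\lambda_j=4\sin^2(\pi j/2r)$ for $0\le j<r$ and $\mu_k=4\sin^2(\pi k/2s)$ for $0\le k<s$. Hence
\[
  \tau(r,s)=\frac{1}{rs}\prod_{(j,k)\ne(0,0)}(\lambda_j+\mu_k).
\]
We then take the product over $k$ for each fixed $j\ge1$. Write $\lambda_j=4\sinh^2(\theta_j/2)$, so that $2+\lambda_j=2\cosh\theta_j$. The Chebyshev identity $\prod_{k=0}^{s-1}(\lambda+\mu_k)=\lambda\,U_{s-1}(1+\lambda/2)^2$-type factorization then gives
\[
  \prod_{k}(\lambda_j+\mu_k)=\frac{\sinh(s\theta_j)}{\sinh\theta_j}\cdot 2\tanh(\theta_j/2)\cdots,
\]
up to an exact elementary correction. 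The $j=0$ row contributes $s^2\cdot(\text{path factor})/s$. This yields an exact formula of the form
\[
  \log\tau(r,s)=\sum_{j=1}^{r-1}\Bigl(s\theta_j+\log(1-e^{-2s\theta_j})+c(\theta_j)\Bigr)+\log s-\log r+\dots,
\]
where $c$ depends only on $\theta_j$. Here $\theta_j=2\arsinh(\sin(\pi j/2r))$, and the key fact is $\theta_j\le 2\arsinh(1)$. The term $\sum_j s\theta_j$ is linear in $s$, so its symmetrization gives the main quantity.

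Next, we reduce to a balancing move. Because the claimed bound is additive in $(A+B)-(a+b)$, it is not enough to chain moves in general: intermediate rectangles need integer sides with the same area. Instead we compare the two rectangles directly. Set $F(r,s)=\log\tau(r,s)$, and look at the function $G(x)=F(x,N/x)$ for real $x$, extended through the hyperbolic-coordinate integral representation. We aim to show
\[
  \frac{d}{dx}\Bigl(G(x)+\arsinh(1)\,(x+N/x)\Bigr)\ge0 \qquad\text{for } x\le\sqrt N.
\]
Equivalently, we prove the discrete version directly: we write the exact identity for $F(a,b)-F(A,B)$ as $\arsinh(1)\bigl((A+B)-(a+b)\bigr)$ plus a residual, and show the residual is positive. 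The main term comes from comparing the sums $\sum_j b\,\theta_j^{(a)}$ and $\sum_j B\,\theta_j^{(A)}$. Here $\sum_{j=1}^{r-1}\theta_j$ is a trapezoidal-rule approximation of $\frac{2r}{\pi}\int_0^{\pi/2}2\arsinh(\sin t)\,dt$, and the endpoint at $t=\pi/2$ contributes exactly $\arsinh(1)$ per unit of side length. Concavity or monotonicity of $\arsinh(\sin t)$ on $[0,\pi/2]$ shows that the trapezoidal error, times $s$, is monotone in the aspect ratio. This produces the term $\arsinh(1)\bigl((A+B)-(a+b)\bigr)$ with the correct sign, since the bulk term $\frac{4G}{\pi}rs$ (Catalan) cancels at equal area. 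The remaining pieces are $\log(1-e^{-2s\theta_j})$, $c(\theta_j)$, and the $\log$ corrections. For these we show the difference is positive by monotonicity in $r$ at fixed area, using $\theta_j^{(r)}\ge$ const$\cdot j/r$ so that the $e^{-2s\theta_j}$ terms are controlled.

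For optimality, we take $A=1$, $B=N$ versus $a=2$, $b=N/2$, or more simply long thin comparisons where $A,a$ are fixed and $B,b\to\infty$. Then $\log\tau(r,s)=s\sum_{j<r}\theta_j^{(r)}+O(1)$. Comparing per-unit-length rates shows that the ratio of the log-gain to $(A+B)-(a+b)$ tends to $\arsinh(1)$, for instance with $r$ and $2r$ at large $r$ where the trapezoidal endpoint term dominates. Strictness follows from strict positivity of the residual.

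The main obstacle is the residual positivity. Specifically, we must show uniformly for all integer pairs, including small sides like $A=1,2$, that the trapezoidal error and the $\log(1-e^{-2s\theta})$ and $c(\theta)$ corrections combine to a nonnegative quantity. The first attempt will be to establish that $r\mapsto F(r,N/r)+\arsinh(1)(r+N/r)$ is increasing on $[1,\sqrt N]$ via an explicit derivative bound on the real-variable extension. Small cases will be checked directly from the product formula.
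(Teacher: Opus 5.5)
There is a genuine gap in both load-bearing steps. The first concerns the sharp constant. It needs the half-weighted trapezoidal sums $T_r=\frac1r\bigl(\frac{c(0)+c(1)}2+\sum_{j=1}^{r-1}c(j/r)\bigr)$ (your $\theta_j$ is $c(j/r)$) to be nondecreasing for \emph{all} integers $r<s$. Equivalently, $C_s/s-C_r/r\ge\alpha(s-r)/(rs)$. Concavity gives this only when $r\mid s$, and in general it fails for increasing concave functions: $g(x)=\min(x,\tfrac12)$ has $g(0)=0$, yet $T_3(g)=\tfrac{13}{36}<\tfrac38=T_2(g)$. Concavity alone yields only the weaker constant $c(1)-I\approx0.5965$. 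The paper gets monotonicity from a property special to $c$, namely the exact identity $T_r=I+\frac1{\pi r}\int_0^\pi\log\bigl(1-e^{-2r\gamma(\phi)}\bigr)\,d\phi$. This follows from $\gamma(\theta)=\log2+\frac1\pi\int_0^\pi\log(2-\cos\theta-\cos\phi)\,d\phi$ and the fact that the trapezoidal rule annihilates $\cos(m\theta)$ unless $2r\mid m$. Combined with the monotonicity of $p\mapsto p^{-1}\log(1-e^{-2pz})$, it gives $T_s\ge T_r$.

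The second gap is that the residual positivity you defer is false for the split you set up. After the Kirchhoff normalization cancels, $\log\tau(r,s)=\sum_{j=1}^{r-1}\log\frac{\sinh(s\theta_j)}{\sinh\theta_j}$ exactly; there is no $\log s-\log r$ term. Using $\prod_j\lambda_j(r)=r$ and $\prod_j(4+\lambda_j(r))=\sinh(2\alpha r)/\sinh(2\alpha)$, and expanding both rectangles along their short sides, one gets exactly
\[
  \log\frac{\tau(a,b)}{\tau(A,B)}
  =\alpha\bigl((A+B)-(a+b)\bigr)+AB\,(T_a-T_A)-\frac12\log\frac{a}{A}
  +\frac12\log\frac{1-e^{-4\alpha A}}{1-e^{-4\alpha a}}+\Delta,
\]
where $\Delta$ is the difference of the $\log(1-e^{-2s\theta_j})$ sums. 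The terms after $AB(T_a-T_A)$ are not nonnegative. For $1\times4\to2\times2$ they total about $-0.37$. Along $n^2\times(n+1)^2\to n(n+1)\times n(n+1)$ they are about $-\pi/(6n)$, and only $AB(T_a-T_A)\approx\pi/(6n)$ cancels them at order $1/n$. So bounding these pieces separately, e.g.\ via $\theta_j\ge\mathrm{const}\cdot j/r$, cannot work. The unspecified real-variable derivative does not fill the hole either.

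The missing idea is to compare both rectangles with the intermediate $A\times b$. Expand $\tau(a,b)/\tau(A,b)$ over the $b$-modes and $\tau(A,B)/\tau(A,b)$ over the $A$-modes. The $\sinh\theta$ denominators, which produce the negative corner term above, then cancel inside each ratio. With $t=a/A=B/b$, everything becomes a sum of $\log\frac{\sinh(tz)}{\sinh z}=(t-1)z+G_t(z)$. The main term is $(t-1)(AC_b-bC_A)\ge\alpha(t-1)(b-A)=\alpha\bigl((A+B)-(a+b)\bigr)$ by trapezoidal monotonicity. The residual $\sum_{k<b}G_t(Ac(k/b))-\sum_{j<A}G_t(bc(j/A))$ is positive via the injection $j\mapsto\lfloor jb/A\rfloor$ and the decrease of $G_t$.

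On optimality, your $1\times N$ versus $2\times N/2$ example has limiting ratio $c(1/2)\approx1.317$, not $\alpha$. Your $r$ versus $2r$ version, letting $b\to\infty$ first, does work: the ratio tends to $C_{2r}-2C_r=\alpha+2E_r-E_{2r}$, with $E_r=rI-\alpha-C_r=O(1/r)$, so it tends to $\alpha$. That is a valid alternative to the paper's near-square sequence.
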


The quantity $(A+B)-(a+b)$ is the reduction in the sum of side lengths when the rectangle is balanced. For rectangles embedded in the square lattice, it is also half the reduction in the outer grid-boundary incidence count. The statement is pairwise and exact within the rectangular family, in contrast with coarser perimeter comparisons that do not resolve which equal-area rectangle has larger spanning-tree count.

\subsection{Our Contributions}

We give a finite-size comparison theory for spanning-tree counts of rectangular grid graphs at fixed area. The main contributions are the following.

\begin{enumerate}
\setlength{\itemsep}{1pt}
\setlength{\parsep}{0pt}
\setlength{\topsep}{2pt}
        \item[(1)] \emph{Balancing theorem} (\textbf{\Cref{thm:balancing}}). Among equal-area rectangular grids, moving toward a more balanced shape strictly increases the number of spanning trees, with the optimal universal constant $\alpha=\arsinh(1)$. The proof reduces Kirchhoff's product formula to a one-dimensional hyperbolic identity (\textbf{\Cref{prop:product,lem:hyperbolic,thm:exact-gain}}), then combines a sharp trapezoidal estimate with an optimality sequence (\textbf{\Cref{lem:trap,prop:optimality}}).
\item[(2)] \emph{Divisor-rectangle maximality} (\textbf{\Cref{cor:square-max}}). Among fixed-area divisor rectangles, the closest-to-square divisor pair is the unique maximizer; in particular, the $n\times n$ square is the unique maximizing rectangle at area $n^2$.
\item[(3)] \emph{Residual asymptotics} (\textbf{\Cref{prop:bounded-aspect-residual}}). The residual term is not only positive for the comparison theorem; it also contains the bounded-aspect rectangular asymptotic, including the classical square-grid area, boundary, and logarithmic corner terms.
\end{enumerate}

The comparison is not a factor-by-factor consequence of the product formula. After the Kirchhoff normalization and the one-dimensional modes cancel, the remaining product for $R_{r,s}$ has $(r-1)(s-1)$ interaction factors. Under a fixed-area balancing move, both the number of factors and the eigenvalue grids change. Some spectral quantities increase and others decrease, and there is no natural pointwise matching of factors that proves the result. The proof instead uses the Chebyshev-hyperbolic form of the one-dimensional characteristic polynomial to express the entire log-ratio as a structured sum. Only after splitting
\[
  \log\frac{\sinh(tz)}{\sinh z}
  =
  (t-1)z+\log\frac{1-e^{-2tz}}{1-e^{-2z}}
\]
do the two mechanisms become visible: a main term governed by a trapezoidal identity for the special function $c(x)=\arcosh(2-\cos \pi x)$, and a positive monotone residual.
As a short illustration, the same residual decomposition recovers the correct area, boundary, and logarithmic corner terms in the bounded-aspect rectangular asymptotic, including the square case. The hyperbolic form also gives one-dimensional evaluation formulas for related product grids, although the sharp comparison theorem is specific to rectangles.
For comparison with a more elementary concavity viewpoint, \Cref{app:concavity-constant} records a weaker balancing constant $\alpha_0=c(1)-\int_0^1 c(x)\,dx$ and its proof; the sharp constant in the main theorem remains $\alpha=\arsinh(1)$.

All statements below are restricted to rectangles. We do not analyze arbitrary nonrectangular grid shapes. Within the rectangular class, however, the results are exact: the closest-to-square divisor pair is the unique maximizing shape, and every pairwise rectangular comparison is controlled by the same one-dimensional formula.

\subsection{Related Work}

The classical starting point is Kirchhoff's Matrix--Tree Theorem~\cite{Kirchhoff1847}, which expresses the spanning-tree count as a Laplacian cofactor, together with product formulas and asymptotic studies for rectangular grids and lattice graphs~\cite{Temperley1974,Wu1977,ShrockWu2000,GolinLeungWangYong2005}. Periodic boundary conditions and toroidal grids have also been studied from a spectral-asymptotic viewpoint: Chinta, Jorgenson, and Karlsson related spanning-tree complexity of discrete tori to heat kernels and heights of real tori~\cite{ChintaJorgensonKarlsson2010,ChintaJorgensonKarlsson2012}, and Louis gave factor-reduction formulas for circulant graphs and discrete tori~\cite{Louis2015}.

A separate line of motivation comes from redistricting and tree-weighted graph partitions. ReCom-type chains and reversible variants use these weights~\cite{DeFordDuchinSolomon2021,CannonDuchinRandallRule2026}. Procaccia and Tucker-Foltz relate them to cut-edge perimeter and record the broader square-maximality conjecture for induced grid subgraphs~\cite{ProcacciaTuckerFoltz2022}; Cannon, Pegden, and Tucker-Foltz study balanced tree-weighted grid partitions~\cite{CannonPegdenTuckerFoltz2024}, and Cannon, Pankow, Pegden, and Tucker-Foltz later study direct sampling of tree-weighted partitions~\cite{CannonPankowPegdenTuckerFoltz2025}. Tapp's boundary-sensitive bounds for grid subgraphs are close in spirit~\cite{Tapp2024}. These results support the compactness and square-maximality heuristic, but they do not imply the exact finite equal-area rectangular comparison proved here, even within the rectangular subclass.

\paragraph{Organization.}
\Cref{sec:preliminaries} records the rectangular-grid product formula. \Cref{sec:hyperbolic-setup} introduces the hyperbolic reduction. \Cref{sec:balancing-gain} derives the balancing-gain identity. \Cref{sec:trapezoidal-estimate} proves the trapezoidal estimate that supplies the sharp constant. \Cref{sec:proof-optimality} proves the balancing theorem and optimality, derives the bounded-aspect asymptotic, and gives the divisor-rectangle consequence. \Cref{sec:discussion} discusses the scope of the method and the induced-grid conjecture.

\section{Preliminaries}\label{sec:preliminaries}

For $r\ge 1$ and $0\le j\le r-1$, set
\[
  \lambda_j(r)=2-2\cos\frac{\pi j}{r}.
\]
Thus $\lambda_0(r)=0$, and for $j\ge1$, $\lambda_j(r)=4\sin^2(\pi j/(2r))$.

\begin{proposition}\label{prop:product}
For all positive integers $r,s$,
\[
  \tau(r,s)
  =
  \prod_{j=1}^{r-1}\prod_{k=1}^{s-1}
  \bigl(\lambda_j(r)+\lambda_k(s)\bigr),
\]
with the convention that an empty product is $1$.
\end{proposition}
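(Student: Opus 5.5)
The plan is to apply Kirchhoff's Matrix--Tree Theorem to the Laplacian of $R_{r,s}$ and to diagonalize it explicitly. First, the Laplacian of the path $P_r$ is diagonalized by the cosine vectors
\[
  v_j(x)=\cos\frac{\pi j(x-\tfrac12)}{r},\qquad x=1,\dots,r,\quad 0\le j\le r-1,
\]
with eigenvalues $\lambda_j(r)=2-2\cos(\pi j/r)$. One checks this directly from the recurrence at interior vertices. At the endpoints the Neumann-type reflection $v_j(0)=v_j(1)$ holds, which is exactly what makes the half-integer shift work. These $r$ eigenvalues are distinct, so the $v_j$ form a complete eigenbasis.

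Next I use that the Laplacian of a Cartesian product is $L(G\square H)=L(G)\otimes I+I\otimes L(H)$. Hence the tensor products $v_j\otimes w_k$ are eigenvectors of $L(R_{r,s})$ with eigenvalues $\lambda_j(r)+\lambda_k(s)$, for $0\le j\le r-1$ and $0\le k\le s-1$. Because $R_{r,s}$ is connected, the eigenvalue $0$ (at $j=k=0$) is simple. The Matrix--Tree Theorem in its eigenvalue form then gives
\[
  \tau(r,s)=\frac{1}{rs}\prod_{(j,k)\ne(0,0)}\bigl(\lambda_j(r)+\lambda_k(s)\bigr).
\]

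Now I split the index set into three parts: the pairs with $k=0$ and $j\ge1$, the pairs with $j=0$ and $k\ge1$, and the pairs with $j,k\ge1$. The first part contributes $\prod_{j=1}^{r-1}\lambda_j(r)$. Since the path has exactly one spanning tree, the Matrix--Tree Theorem for $P_r$ gives
\[
  \prod_{j=1}^{r-1}\lambda_j(r)=r\,\tau(P_r)=r.
\]
Similarly, the second part contributes $s$. These two factors cancel the prefactor $1/(rs)$, leaving precisely the double product over $1\le j\le r-1$, $1\le k\le s-1$. If $r=1$ or $s=1$, the remaining product is empty and equals $1$, which agrees with $\tau$ of a path.

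There is no real obstacle here. The only points needing care are the boundary check for the cosine eigenvectors and the identity $\prod_{j=1}^{r-1}4\sin^2(\pi j/(2r))=r$. If one wants to avoid invoking the Matrix--Tree Theorem for $P_r$, this identity also follows from the classical formula $\prod_{j=1}^{r-1}2\sin(\pi j/(2r))=\sqrt{r}$.
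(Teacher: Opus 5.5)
Your proposal is correct and follows essentially the same route as the paper: Kirchhoff's theorem in eigenvalue form for $P_r\square P_s$, then cancelling the prefactor $1/(rs)$ against the one-dimensional factors via $\prod_{j=1}^{r-1}\lambda_j(r)=r$. Your derivation of that identity from the Matrix--Tree Theorem applied to the path itself (using $\tau(P_r)=1$) is a tidy alternative to the sine-product formula the paper cites, and your explicit cosine eigenvectors supply the spectrum of $P_r$ that the paper takes as known.
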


\begin{proof}
The Laplacian eigenvalues of $P_r$ are $\lambda_j(r)$, $0\le j\le r-1$. Hence the Laplacian eigenvalues of $R_{r,s}=P_r\square P_s$ are $\lambda_j(r)+\lambda_k(s)$, with $0\le j\le r-1$ and $0\le k\le s-1$.
The only zero eigenvalue is the one with $(j,k)=(0,0)$. Kirchhoff's Matrix--Tree Theorem~\cite{Kirchhoff1847} gives
\[
  \tau(r,s)
  =
  \frac{1}{rs}
  \prod_{\substack{0\le j\le r-1\\0\le k\le s-1\\(j,k)\ne(0,0)}}
  \bigl(\lambda_j(r)+\lambda_k(s)\bigr).
\]
The one-dimensional factors cancel the prefactor because $\prod_{j=1}^{r-1}\lambda_j(r)=r$ for every $r\ge1$, with the empty-product convention when $r=1$. For $r\ge2$ this is the standard sine product $\prod_{j=1}^{r-1}2\sin(\pi j/(2r))=\sqrt r$.
After canceling the factors with $j=0$ or $k=0$, only the positive-mode interaction product remains.
\end{proof}

The empty-product convention gives $\tau(1,s)=\tau(r,1)=1$, as it should for one-dimensional path graphs.

\section{Hyperbolic Setup}\label{sec:hyperbolic-setup}

For $r\ge1$, define
\[
  q_r(x)=\prod_{j=1}^{r-1}\bigl(x+\lambda_j(r)\bigr).
\]
Then \Cref{prop:product} can be written as
\begin{equation}\label{eq:tau-q}
  \tau(r,s)
  =
  \prod_{j=1}^{r-1}q_s(\lambda_j(r))
  =
  \prod_{k=1}^{s-1}q_r(\lambda_k(s)).
\end{equation}

\begin{lemma}\label{lem:hyperbolic}
If $x=2\cosh z-2$ with $z>0$, then
\[
  q_r(x)=\frac{\sinh(rz)}{\sinh z}.
\]
\end{lemma}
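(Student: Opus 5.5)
The plan is to compare the zeros of both sides as polynomials in $x$ and then match leading coefficients. Both sides are even functions of $z$, and $\sinh(rz)/\sinh z$ is a polynomial in $\cosh z$ of degree $r-1$. This follows from the Chebyshev recursion
\[
  \frac{\sinh((r+1)z)}{\sinh z}=2\cosh z\,\frac{\sinh(rz)}{\sinh z}-\frac{\sinh((r-1)z)}{\sinh z},
\]
which comes from $\sinh((r+1)z)+\sinh((r-1)z)=2\sinh(rz)\cosh z$, together with the base cases $r=1$ (value $1$) and $r=2$ (value $2\cosh z$). So define $U_r(y)$ by this recursion in the variable $y=2\cosh z$, with $U_1=1$, $U_2=y$, $U_{r+1}=yU_r-U_{r-1}$. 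Then $U_r$ is a monic polynomial of degree $r-1$ in $y$. Substituting $y=x+2$ makes $p_r(x):=U_r(x+2)$ a monic polynomial of degree $r-1$ in $x$. The identity $p_r(2\cosh z-2)=\sinh(rz)/\sinh z$ then holds for all $z>0$, and indeed for all complex $z$ where $\sinh z\neq 0$, by continuity/analytic continuation.

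Next, locate the zeros of $p_r$. Take $z=i\theta$ with $\theta=\pi j/r$ for $1\le j\le r-1$. Then $\sinh(rz)/\sinh z=\sin(r\theta)/\sin\theta=0$, since $\sin\theta\neq0$ for these $j$. The corresponding point is $x=2\cos\theta-2=-\lambda_j(r)$. These $r-1$ values are distinct because $\cos$ is injective on $(0,\pi)$. A monic polynomial of degree $r-1$ with these $r-1$ distinct roots must equal $\prod_{j=1}^{r-1}(x+\lambda_j(r))=q_r(x)$. Hence $q_r(x)=p_r(x)=\sinh(rz)/\sinh z$ whenever $x=2\cosh z-2$. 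The case $r=1$ is trivial, since both sides equal $1$.

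The only mildly delicate point is justifying evaluation at imaginary $z$. I would handle it without analytic continuation by noting that the same recursion gives $U_r(2\cos\theta)=\sin(r\theta)/\sin\theta$ directly, using $\sin((r+1)\theta)+\sin((r-1)\theta)=2\sin(r\theta)\cos\theta$. The root identification then uses only real trigonometric identities. Everything else is routine.
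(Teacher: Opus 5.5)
Your proposal is correct and follows essentially the same route as the paper: define the Chebyshev-type recursion $p_{r+1}=(x+2)p_r-p_{r-1}$, check $p_r(2\cosh z-2)=\sinh(rz)/\sinh z$ and $p_r(2\cos\phi-2)=\sin(r\phi)/\sin\phi$, and conclude $p_r=q_r$ because both are monic of degree $r-1$ with roots $-\lambda_j(r)$. Your explicit check that these roots are distinct, via injectivity of $\cos$ on $(0,\pi)$, is a small point that the paper leaves implicit.
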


\begin{proof}
Let $p_1(x)=1$, $p_2(x)=x+2$, and
\[
  p_{r+1}(x)=(x+2)p_r(x)-p_{r-1}(x).
\]
By induction, $p_r$ is monic of degree $r-1$. If $x=2\cosh z-2$, then
\[
  \sinh((r+1)z)=2\cosh z\,\sinh(rz)-\sinh((r-1)z)
\]
shows that $p_r(x)=\sinh(rz)/\sinh z$.

For $r\ge2$, substituting $x=2\cos\phi-2$ gives
\[
  p_r(2\cos\phi-2)=\frac{\sin(r\phi)}{\sin\phi}
\]
whenever $\sin\phi\ne0$. Taking $\phi=\pi j/r$, $1\le j\le r-1$, shows that $p_r(-\lambda_j(r))=0$. Thus $p_r$ and $q_r$ are monic polynomials of degree $r-1$ with the same roots.
\end{proof}

Define
\[
  c(x)=\arcosh(2-\cos\pi x)
  =
  2\arsinh\left(\sin\frac{\pi x}{2}\right),
  \qquad 0\le x\le1.
\]
For $1\le j\le r-1$,
\begin{equation}\label{eq:lambda-c}
  \lambda_j(r)=2\cosh c(j/r)-2.
\end{equation}

\begin{lemma}\label{lem:c}
The function $c$ satisfies $c(0)=0$, is increasing and concave on $[0,1]$, and
\[
  c(1)=2\arsinh(1)=2\log(1+\sqrt{2}).
\]
\end{lemma}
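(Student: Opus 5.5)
The proof works directly from the closed form $c(x)=2\arsinh(\sin(\pi x/2))$, after first checking that it agrees with $\arcosh(2-\cos\pi x)$. Put $u=\sin(\pi x/2)\in[0,1]$. Then
\[
  \cosh\bigl(2\arsinh u\bigr)=1+2\sinh^2(\arsinh u)=1+2u^2=1+(1-\cos\pi x)=2-\cos\pi x .
\]
Both $2\arsinh u$ and $\arcosh(2-\cos\pi x)$ are nonnegative, and $\cosh$ is injective on $[0,\infty)$. So the two expressions coincide on $[0,1]$. The boundary values follow at once: $c(0)=2\arsinh(0)=0$ and $c(1)=2\arsinh(1)$. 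Finally, $\arsinh(1)=\log(1+\sqrt{1+1})=\log(1+\sqrt2)$.

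Monotonicity is immediate from the composition. On $[0,1]$ the map $x\mapsto\sin(\pi x/2)$ is increasing, and $\arsinh$ is increasing. For concavity and a second proof of monotonicity I will differentiate. By the chain rule,
\[
  c'(x)=\frac{\pi\cos(\pi x/2)}{\sqrt{1+\sin^2(\pi x/2)}},
\]
which is $\ge0$ on $[0,1]$. Write $\theta=\pi x/2\in[0,\pi/2]$ and $g(\theta)=\cos\theta\,(1+\sin^2\theta)^{-1/2}$, so that $c'(x)=\pi g(\theta)$. Then
\[
  g'(\theta)=-\sin\theta\,(1+\sin^2\theta)^{-1/2}-\cos^2\theta\sin\theta\,(1+\sin^2\theta)^{-3/2}
  =-\frac{\sin\theta\,\bigl(1+\sin^2\theta+\cos^2\theta\bigr)}{(1+\sin^2\theta)^{3/2}}
  =-\frac{2\sin\theta}{(1+\sin^2\theta)^{3/2}}\le0 .
\]
Hence $c''(x)=\tfrac{\pi^2}{2}g'(\theta)\le0$ on $[0,1]$, which gives concavity. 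The inequality is strict on $(0,1]$, so $c$ is in fact strictly concave there.

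No step is a real obstacle. The only place needing care is the simplification of $g'$, where the numerator collapses because $\sin^2\theta+\cos^2\theta=1$. Two alternatives exist. One could verify concavity using the $\arcosh$ form directly, but that is messier. One could also observe that $\arsinh$ is concave on $[0,\infty)$ and $\sin(\pi x/2)$ is concave on $[0,1]$, so the increasing concave function $\arsinh$ composed with a concave function is concave. This composition argument gives concavity without computation, and I would mention it as a cross-check.
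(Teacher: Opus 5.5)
Your proof is correct and follows the paper's route. Both read off $c(0)$ and $c(1)$ from the closed form $2\arsinh(\sin(\pi x/2))$ and differentiate twice, obtaining $c'\ge0$ and $c''(x)=-\pi^2\sin(\pi x/2)\bigl(1+\sin^2(\pi x/2)\bigr)^{-3/2}\le0$. Your explicit check that the $\arcosh$ and $\arsinh$ forms agree, which the paper only asserts in the definition, and the composition cross-check are sound additions.
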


\begin{proof}
The value $c(0)=0$ and the formula for $c(1)$ are immediate from
$c(x)=2\arsinh(\sin(\pi x/2))$.
Differentiating gives
\[
  c'(x)=\frac{\pi\cos(\pi x/2)}{(1+\sin^2(\pi x/2))^{1/2}}\ge0
\]
and
\[
  c''(x)=
  -\frac{\pi^2\sin(\pi x/2)}
  {(1+\sin^2(\pi x/2))^{3/2}}
  \le0.
\]
\end{proof}

Put
\[
  C_r=\sum_{j=1}^{r-1}c(j/r).
\]
We also write
\[
  \alpha=\frac{c(1)}{2}=\arsinh(1)=\log(1+\sqrt{2}).
\]
Finally set
\[
  I=\int_0^1c(x)\dd x.
\]

\section{Balancing Gain}\label{sec:balancing-gain}

We first record an exact identity for the log-gain under a balancing move.

\begin{lemma}\label{lem:G}
Let $t>1$, and define
\[
  G_t(z)=\log\frac{1-e^{-2tz}}{1-e^{-2z}},
  \qquad z>0.
\]
Then $G_t(z)>0$ for all $z>0$, and $G_t$ is strictly decreasing on $(0,\infty)$.
\end{lemma}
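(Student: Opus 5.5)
Since $t>1$ and $z>0$ give $2tz>2z>0$, we have $e^{-2tz}<e^{-2z}<1$. Hence $1-e^{-2tz}>1-e^{-2z}>0$, the ratio inside the logarithm exceeds $1$, and $G_t(z)>0$. Positivity therefore needs nothing beyond monotonicity of the exponential.

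For strict decrease, I will differentiate and write
\[
  G_t(z)=\log\bigl(1-e^{-2tz}\bigr)-\log\bigl(1-e^{-2z}\bigr).
\]
Then
\[
  G_t'(z)=\frac{2t\,e^{-2tz}}{1-e^{-2tz}}-\frac{2e^{-2z}}{1-e^{-2z}}
  =\frac{2t}{e^{2tz}-1}-\frac{2}{e^{2z}-1}.
\]
Introduce $h(u)=u/(e^{u}-1)$ for $u>0$. Then
\[
  G_t'(z)=\frac{1}{z}\bigl(h(2tz)-h(2z)\bigr).
\]
So it suffices to show that $h$ is strictly decreasing on $(0,\infty)$, because $2tz>2z$ then gives $G_t'(z)<0$.

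For $h$, compute
\[
  h'(u)=\frac{e^u-1-ue^u}{(e^u-1)^2}.
\]
The numerator $m(u)=e^u-1-ue^u$ satisfies $m(0)=0$ and $m'(u)=-ue^u<0$ for $u>0$. Hence $m(u)<0$ on $(0,\infty)$, so $h'<0$ there.

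An equivalent route avoids calculus: $\frac{1}{e^u-1}=\sum_{n\ge1}e^{-nu}$, and the function $u\mapsto u e^{-nu}$ is not monotone, so I prefer the derivative argument above. None of the steps is a real obstacle; the only point needing care is recognizing the common structure $h(2tz)-h(2z)$, which reduces the problem to a one-variable monotonicity check.
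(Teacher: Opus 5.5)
Your proof is correct and follows the paper's argument. You use the same positivity observation, the same derivative $G_t'(z)=\frac{2t}{e^{2tz}-1}-\frac{2}{e^{2z}-1}$, and the same reduction to monotonicity of $u/(e^u-1)$; the only difference is that the paper gets strict increase of the reciprocal $(e^y-1)/y$ from the representation $\int_0^1 e^{sy}\,ds$, whereas you differentiate, which is equally valid (and your aside about the series route proves nothing, so it can be dropped).
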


\begin{proof}
Since $t>1$, we have $1-e^{-2tz}>1-e^{-2z}$ for every $z>0$, so $G_t(z)>0$. Differentiating gives
\[
  G_t'(z)=\frac{2t}{e^{2tz}-1}-\frac{2}{e^{2z}-1}.
\]
For $y>0$, the function $(e^y-1)/y$ is strictly increasing, since
\[
  \frac{e^y-1}{y}=\int_0^1e^{sy}\dd s.
\]
Thus
\[
  \frac{e^{2tz}-1}{2tz}>\frac{e^{2z}-1}{2z},
\]
which is equivalent to $G_t'(z)<0$.
\end{proof}

\begin{theorem}\label{thm:exact-gain}
Let $A,B,a,b$ be positive integers satisfying
\[
  AB=ab,\qquad A\le a\le b\le B.
\]
Set
\[
  t=\frac{a}{A}=\frac{B}{b}.
\]
If $t=1$, then the rectangles are identical and
\[
  \log\tau(a,b)-\log\tau(A,B)=0.
\]
If $t>1$, then
\begin{equation}\label{eq:exact-gain}
  \log\frac{\tau(a,b)}{\tau(A,B)}
  =
  (t-1)(AC_b-bC_A)+\Gamma,
\end{equation}
where
\[
  \Gamma
  =
  \sum_{k=1}^{b-1}G_t\bigl(Ac(k/b)\bigr)
  -
  \sum_{j=1}^{A-1}G_t\bigl(bc(j/A)\bigr).
\]
Moreover, $\Gamma>0$ when $t>1$.
\end{theorem}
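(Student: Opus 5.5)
The plan is to write both spanning-tree counts through \eqref{eq:tau-q}, each expanded along its "short" index. For each count we insert an intermediate rectangle $R_{A,b}$ and use \Cref{lem:hyperbolic} to turn every factor into a ratio of hyperbolic sines. The case $t=1$ is trivial: $a=A$ forces $b=B$.

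For $t>1$ set $z_k=c(k/b)$ for $1\le k\le b-1$ and $w_j=c(j/A)$ for $1\le j\le A-1$. All of these are positive, and \eqref{eq:lambda-c} gives $\lambda_k(b)=2\cosh z_k-2$ and $\lambda_j(A)=2\cosh w_j-2$.

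\emph{Step 1: expand all three counts.} By \eqref{eq:tau-q} and \Cref{lem:hyperbolic}, using $a=tA$ and $B=tb$,
\[
  \log\tau(a,b)=\sum_{k=1}^{b-1}\log\frac{\sinh(tAz_k)}{\sinh z_k},\qquad
  \log\tau(A,B)=\sum_{j=1}^{A-1}\log\frac{\sinh(tbw_j)}{\sinh w_j}.
\]
The same two formulas describe the intermediate rectangle $R_{A,b}$ in two ways:
\[
  \log\tau(A,b)=\sum_{k=1}^{b-1}\log\frac{\sinh(Az_k)}{\sinh z_k}=\sum_{j=1}^{A-1}\log\frac{\sinh(bw_j)}{\sinh w_j}.
\]

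\emph{Step 2: subtract through $R_{A,b}$.} Subtract the first form of $\log\tau(A,b)$ from $\log\tau(a,b)$, and the second form from $\log\tau(A,B)$. This gives
\[
  \log\frac{\tau(a,b)}{\tau(A,B)}=\sum_{k=1}^{b-1}\log\frac{\sinh(tAz_k)}{\sinh(Az_k)}-\sum_{j=1}^{A-1}\log\frac{\sinh(tbw_j)}{\sinh(bw_j)}.
\]

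\emph{Step 3: split off the linear part.} For $u>0$,
\[
  \log\frac{\sinh(tu)}{\sinh u}=(t-1)u+G_t(u).
\]
Apply this with $u=Az_k$ and with $u=bw_j$. The linear parts sum to $(t-1)\bigl(A\sum_k z_k-b\sum_j w_j\bigr)=(t-1)(AC_b-bC_A)$. The remaining terms are exactly $\Gamma$, which proves \eqref{eq:exact-gain}.

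\emph{Step 4: positivity of $\Gamma$.} This is the only step needing an idea, namely a term-by-term injection of the negative sum into the positive sum. Since $A\le b$, every index $j\in\{1,\dots,A-1\}$ is also a valid index $k=j$ in the first sum. For such $j$ we have $j/b\le j/A$, and $c$ is increasing by \Cref{lem:c}, so $c(j/b)\le c(j/A)$. Together with $A\le b$ this gives
\[
  Ac(j/b)\le bc(j/A).
\]
Because $G_t$ is decreasing by \Cref{lem:G}, it follows that $G_t\bigl(Ac(j/b)\bigr)\ge G_t\bigl(bc(j/A)\bigr)$ for each $j$. The leftover terms $k=A,\dots,b-1$ are strictly positive, again by \Cref{lem:G}.

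Strictness comes from showing this leftover range is nonempty. When $t>1$ we have $A<a\le b$, so $b>A$. Hence the leftover range contains at least the term $k=b-1$, and this makes $\Gamma>0$. The case $A=1$ needs no separate treatment: the subtracted sum is then empty.
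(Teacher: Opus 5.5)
Your proof is correct and follows the paper's route: the same intermediate rectangle $R_{A,b}$, the same splitting $\log(\sinh(tu)/\sinh u)=(t-1)u+G_t(u)$, and the same monotone-matching argument for $\Gamma>0$. The only difference is that you match $j\mapsto j$ where the paper matches $j\mapsto\lfloor jb/A\rfloor$; the chain $Ac(j/b)\le Ac(j/A)\le bc(j/A)$ already holds for your choice, so the identity matching works and is slightly simpler, since it needs no injectivity check.
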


\begin{proof}
The case $t=1$ is immediate. Assume $t>1$. We compare both rectangles to the intermediate rectangle $A\times b$. By \eqref{eq:tau-q}, \eqref{eq:lambda-c}, and \Cref{lem:hyperbolic},
\[
  \log\frac{\tau(a,b)}{\tau(A,b)}
  =
  \sum_{k=1}^{b-1}
  \log\frac{\sinh(a\,c(k/b))}{\sinh(A\,c(k/b))}
\]
and
\[
  \log\frac{\tau(A,B)}{\tau(A,b)}
  =
  \sum_{j=1}^{A-1}
  \log\frac{\sinh(B\,c(j/A))}{\sinh(b\,c(j/A))}.
\]
Subtracting and using $a=tA$ and $B=tb$ gives
\[
  \log\frac{\tau(a,b)}{\tau(A,B)}
  =
  \sum_{k=1}^{b-1}
  \log\frac{\sinh(tA\,c(k/b))}{\sinh(A\,c(k/b))}
  -
  \sum_{j=1}^{A-1}
  \log\frac{\sinh(tb\,c(j/A))}{\sinh(b\,c(j/A))}.
\]
For $z>0$,
\[
  \log\frac{\sinh(tz)}{\sinh z}
  =
  (t-1)z+
  \log\frac{1-e^{-2tz}}{1-e^{-2z}}
  =
  (t-1)z+G_t(z).
\]
This proves \eqref{eq:exact-gain}.

It remains to show that $\Gamma>0$. Since $t>1$, we have $a=tA>A$. Combined with $a\le b$, this implies $b>A$. If $A=1$, there are no terms in the second sum to match. Otherwise, for each $j=1,\ldots,A-1$, set $k_j=\lfloor jb/A\rfloor$.
For these indices, $b/A\le jb/A<b$, so $1\le k_j\le b-1$. If $j'<j$, then $(j-j')b/A\ge b/A>1$, so $k_{j'}<k_j$. Thus the $k_j$ are distinct elements of $\{1,\ldots,b-1\}$. Also $k_j/b\le j/A$. Since $c$ is increasing,
\[
  A c(k_j/b)\le A c(j/A)\le b c(j/A).
\]
By \Cref{lem:G}, $G_t$ is decreasing, and therefore
\[
  G_t\bigl(Ac(k_j/b)\bigr)\ge G_t\bigl(bc(j/A)\bigr).
\]
After matching these $A-1$ terms, the first sum has $b-A$ unmatched terms, each strictly positive by \Cref{lem:G}. Hence $\Gamma>0$.
\end{proof}

\section{A Trapezoidal Estimate}\label{sec:trapezoidal-estimate}

The optimal constant comes from a special trapezoidal monotonicity property of $c$. Let
\[
  \gamma(\theta)=\arcosh(2-\cos\theta),
  \qquad 0\le\theta\le\pi,
\]
so that $c(x)=\gamma(\pi x)$ and
\[
  I=\frac1\pi\int_0^\pi\gamma(\theta)\dd\theta.
\]
For $r\ge1$, define the half-weighted trapezoidal sum
\[
  T_r=
  \frac1r\left(
  \frac{c(0)+c(1)}2+\sum_{j=1}^{r-1}c(j/r)
  \right)
  =
  \frac{C_r}{r}+\frac{\alpha}{r}.
\]
This is the composite trapezoidal approximation to $I$ after the change of variables $\theta=\pi x$. The endpoint contribution is exactly $c(1)/(2r)=\alpha/r$, which is the source of the constant in the balancing theorem. The lemma below gives an exact formula for $T_r-I$; that error is negative and increases monotonically to $0$ as $r$ grows.

\begin{lemma}\label{lem:trap}
For every $r\ge1$,
\begin{equation}\label{eq:trap-identity}
  T_r
  =
  I+
  \frac{1}{\pi r}
  \int_0^\pi
  \log\left(1-e^{-2r\gamma(\phi)}\right)\dd\phi.
\end{equation}
Consequently, if $1\le r\le s$, then
\begin{equation}\label{eq:C-trapezoidal}
  \frac{C_s}{s}-\frac{C_r}{r}
  \ge
  \frac{s-r}{rs}\alpha.
\end{equation}
\end{lemma}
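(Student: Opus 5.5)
The plan is to derive \eqref{eq:trap-identity} by writing each value $c(j/r)=\gamma(\pi j/r)$ as an integral, summing in $j$ with \Cref{lem:hyperbolic}, and evaluating the two constants that remain with one classical integral. The basic tool is
\[
  \frac1\pi\int_0^\pi\log\bigl(2\cosh w-2\cos\phi\bigr)\dd\phi=w,\qquad w\ge0 .
\]
Taking $w=\gamma(\theta)$, so that $2\cosh\gamma(\theta)=4-2\cos\theta$, this gives the symmetric representation
\[
  \gamma(\theta)=\frac1\pi\int_0^\pi\log\bigl(\lambda(\theta)+\lambda(\phi)\bigr)\dd\phi,
  \qquad \lambda(\phi)=2-2\cos\phi=2\cosh\gamma(\phi)-2 .
\]

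First, sum this over $\theta=\pi j/r$ for $1\le j\le r-1$, and interchange the finite sum with the integral. The integrand becomes $\log q_r(\lambda(\phi))$. By \Cref{lem:hyperbolic} with $z=\gamma(\phi)>0$ for $\phi\in(0,\pi]$, this equals $\log\bigl(\sinh(r\gamma(\phi))/\sinh\gamma(\phi)\bigr)$. Next, split exactly as in \Cref{thm:exact-gain}:
\[
  \log\frac{\sinh(rg)}{\sinh g}=(r-1)g+\log(1-e^{-2rg})-\log(1-e^{-2g}).
\]
Integrating, and writing $E_r=\frac1\pi\int_0^\pi\log(1-e^{-2r\gamma(\phi)})\dd\phi$, this yields
\[
  C_r=(r-1)I+E_r-E_1 .
\]
The logarithmic singularity at $\phi=0$, where $\gamma(\phi)\sim\phi$, is integrable, so all manipulations are justified.

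The remaining step, which I expect to be the only real computation, is to evaluate $E_1=\alpha-I$. Write $\log(1-e^{-2g})=\log(2\sinh g)-g$, so that $E_1=\frac1\pi\int_0^\pi\log(2\sinh\gamma)\dd\phi-I$. Since $\sinh^2\gamma=(2-\cos\phi)^2-1=(1-\cos\phi)(3-\cos\phi)$, we have
\[
  \log(2\sinh\gamma)=\tfrac12\log(2-2\cos\phi)+\tfrac12\log(6-2\cos\phi).
\]
The classical integral with $w=0$ shows that the first piece integrates to $0$. With $\cosh w=3$ it shows that the second piece integrates to $\pi\arcosh(3)$. Since $\arcosh 3=\log(3+2\sqrt2)=2\alpha$, the first term of $E_1$ is $\alpha$, hence $E_1=\alpha-I$. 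Substituting gives $C_r=rI-\alpha+E_r$, so $T_r=(C_r+\alpha)/r=I+E_r/r$, which is \eqref{eq:trap-identity}. As a sanity check, for $r=1$ this reads $C_1=0$.

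For \eqref{eq:C-trapezoidal}, note that $E_r/r=\frac1\pi\int_0^\pi\frac1r\log(1-e^{-2r\gamma})\dd\phi$. For each fixed $g>0$ the map $r\mapsto\frac1r\log(1-e^{-2rg})$ is negative and nondecreasing, because $|\log(1-e^{-2rg})|$ decreases in $r$ and $1/r$ decreases. Hence $T_s\ge T_r$ for $r\le s$. Then
\[
  \frac{C_s}{s}-\frac{C_r}{r}=T_s-T_r+\frac{\alpha}{r}-\frac{\alpha}{s}\ge\frac{s-r}{rs}\,\alpha .
\]
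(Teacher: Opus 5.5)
Your proof is correct, and it reaches \eqref{eq:trap-identity} by a genuinely different route from the paper.

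\textbf{Where the routes diverge.} Both arguments start from the same log-cosine integral representation of $\gamma$. The paper then averages over the half-weighted grid, endpoints $\theta=0,\pi$ included. It expands $\log(1-2q\cos\theta+q^2)$ as a Fourier series and applies the trapezoidal aliasing filter: only modes with $2r\mid m$ survive, and these resum to $\frac1r\log(1-q^{2r})$. That route needs uniform convergence on $[\varepsilon,\pi]$, plus an $\varepsilon\downarrow0$ limit, because the endpoint term $L(0,\phi)=\log(1-\cos\phi)$ is singular.

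You sum only over interior nodes, where every integrand is bounded. You then evaluate the product in closed form with \Cref{lem:hyperbolic} and the $\sinh$ split from \Cref{thm:exact-gain}. So you need no Fourier series and no cutoff.

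\textbf{The price of your route.} You must compute the endpoint constant separately, which is your evaluation $E_1=\alpha-I$. Your two classical integrals, at $w=0$ and at $\cosh w=3$, are exactly the symmetric representations of $\gamma(0)=0$ and $\gamma(\pi)=2\alpha$. So $\frac1\pi\int_0^\pi\log(2\sinh\gamma)\dd\phi=\frac{\gamma(0)+\gamma(\pi)}2$ is precisely the half-weighted endpoint contribution that the paper's filter absorbs automatically. It is also the same fact as the paper's later identity $J=I-\alpha$ in \eqref{eq:F-integral}, proved there by a different computation.

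\textbf{What each route buys.} The paper's version makes transparent why the constant is the trapezoidal endpoint weight $c(1)/2$. Yours is more economical given the paper's own lemmas, and it yields directly the form $C_r=rI-\alpha+E_r$ used later. Your monotonicity step (a product of two positive decreasing factors) is a derivative-free version of the paper's argument.

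\textbf{Two small points.}
\begin{itemize}
\item The piece $\frac12\log(6-2\cos\phi)$ integrates to $\frac{\pi}{2}\arcosh 3=\pi\alpha$, not $\pi\arcosh 3$. Your stated value $\alpha$ for the first term of $E_1$ is nonetheless correct.
\item Your $E_r$ is the negative of the paper's $E_r$ in \eqref{eq:C-error}.
\end{itemize}
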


\begin{proof}
We first prove \eqref{eq:trap-identity}. We use the standard identity, valid for $a\ge1$,
\[
  \frac1\pi\int_0^\pi\log(a-\cos\phi)\dd\phi
  =
  \log\frac{a+\sqrt{a^2-1}}{2}.
\]
With $a=2-\cos\theta$, this gives
\begin{equation}\label{eq:gamma-integral}
  \gamma(\theta)
  =
  \log2+
  \frac1\pi\int_0^\pi
  \log(2-\cos\theta-\cos\phi)\dd\phi.
\end{equation}

Fix $0<\phi\le\pi$ and put $q=e^{-\gamma(\phi)}$. Since
\[
  2-\cos\phi=\frac{q+q^{-1}}2,
\]
we have
\[
  2-\cos\theta-\cos\phi
  =
  \frac{1-2q\cos\theta+q^2}{2q}.
\]
Thus, with $L(\theta,\phi)=\log(2-\cos\theta-\cos\phi)$,
\[
  L(\theta,\phi)
  =
  \gamma(\phi)-\log2+
  \log(1-2q\cos\theta+q^2).
\]
For $0<q<1$,
\[
  \log(1-2q\cos\theta+q^2)
  =
  -2\sum_{m=1}^\infty\frac{q^m}{m}\cos(m\theta).
\]
The half-weighted trapezoidal filter is
\begin{equation}\label{eq:trap-filter}
  \frac1r\left(
  \frac{1+\cos(m\pi)}2+
  \sum_{j=1}^{r-1}\cos\frac{m\pi j}{r}
  \right)
  =
  \begin{cases}
  1,&2r\mid m,\\
  0,&2r\nmid m.
  \end{cases}
\end{equation}
Indeed, it is the real part of the periodic trapezoidal average
\[
  \frac1{2r}\sum_{\ell=0}^{2r-1}e^{i m\pi\ell/r}.
\]
Applying \eqref{eq:trap-filter} to the Fourier series of $\theta\mapsto L(\theta,\phi)$ gives
\begin{equation}\label{eq:L-average}
  \frac1r\left[
  \frac{L(0,\phi)+L(\pi,\phi)}2+
  \sum_{j=1}^{r-1}L(\pi j/r,\phi)
  \right]
  =
  \gamma(\phi)-\log2+
  \frac1r\log\left(1-e^{-2r\gamma(\phi)}\right).
\end{equation}

The preceding calculation is first made on $\phi\in[\varepsilon,\pi]$, where the Fourier series is uniformly convergent. Averaging \eqref{eq:gamma-integral} over the half-weighted $\theta$-grid and using \eqref{eq:L-average} gives the truncated identity on $[\varepsilon,\pi]$. It remains only to pass through the endpoint $\phi=0$. Since
\[
  \cos\phi=1-\frac{\phi^2}{2}+O(\phi^4),
  \qquad
  \gamma(\phi)=\arcosh\left(1+\frac{\phi^2}{2}+O(\phi^4)\right)
  =\phi+O(\phi^3),
\]
there is $\delta>0$ such that
\[
  \frac12\phi\le \gamma(\phi)\le 2\phi,
  \qquad 0<\phi\le\delta.
\]
Consequently, for fixed $r$,
\[
  \log\left(1-e^{-2r\gamma(\phi)}\right)
  =
  \log\phi+O_r(1),
  \qquad \phi\downarrow0.
\]
Equivalently, there is a constant $D_r>0$, depending only on $r$, such that
\[
  \left|\log\left(1-e^{-2r\gamma(\phi)}\right)\right|
  \le D_r(1+|\log\phi|),
  \qquad 0<\phi\le\delta,
\]
which is integrable at $0$.
On the left side, the only singular grid term is
\[
  L(0,\phi)=\log(1-\cos\phi)=2\log\phi+O(1);
\]
the remaining grid terms and $\gamma(\phi)-\log2$ are bounded near $0$. Hence both sides have at worst logarithmic endpoint singularities, with an integrable majorant independent of the cutoff $\varepsilon$. The contribution of $0<\phi<\varepsilon$ is $O_r(\varepsilon|\log\varepsilon|)$, and letting $\varepsilon\downarrow0$ gives \eqref{eq:trap-identity}.

It remains to prove monotonicity. For $z>0$ and $p>0$, set
\[
  h_p(z)=\frac1p\log(1-e^{-2pz}).
\]
Then
\[
  \frac{\partial}{\partial p}h_p(z)
  =
  -\frac1{p^2}\log(1-e^{-2pz})
  +
  \frac1p\frac{2z}{e^{2pz}-1}
  >0.
\]
Indeed, $0<1-e^{-2pz}<1$, so the logarithmic term is positive after the minus sign, and the second term is plainly positive. Since $\gamma(\phi)>0$ for $0<\phi\le\pi$, the already established identity \eqref{eq:trap-identity} gives, pointwise in $\phi$, $h_s(\gamma(\phi))\ge h_r(\gamma(\phi))$ whenever $s\ge r$. The endpoint bound above gives integrability, so no differentiation under the integral is needed. Therefore $T_s\ge T_r$, and
\[
  \frac{C_s}{s}+\frac{\alpha}{s}
  \ge
  \frac{C_r}{r}+\frac{\alpha}{r},
\]
which is equivalent to \eqref{eq:C-trapezoidal}.
\end{proof}

Multiplying \eqref{eq:trap-identity} by $r$ and using
$T_r=C_r/r+\alpha/r$ gives the equivalent error formula
\begin{equation}\label{eq:C-error}
  E_r:=rI-\alpha-C_r
  =
  -\frac1\pi
  \int_0^\pi
  \log\left(1-e^{-2r\gamma(\phi)}\right)\dd\phi.
\end{equation}
This is still part of the trapezoidal estimate, but it is written in $C_r$-notation for later use.
Since $\gamma$ is concave, $\gamma(0)=0$, and $\gamma(\pi)=c(1)$,
\[
  \gamma(\phi)\ge \frac{c(1)}{\pi}\phi.
\]
Consequently,
\begin{equation}\label{eq:C-error-bound}
  0\le E_r
  \le
  -\frac1\pi
  \int_0^\pi
  \log\left(1-e^{-2r c(1)\phi/\pi}\right)\dd\phi
  \le
  \frac{\pi^2}{12c(1)r}.
\end{equation}
Indeed, in the last integral the change of variables $u=2rc(1)\phi/\pi$ gives an upper bound
\[
  \frac{1}{2rc(1)}
  \int_0^\infty -\log(1-e^{-u})\dd u
  =
  \frac{\pi^2}{12c(1)r},
\]
using the standard identity
\[
  \int_0^\infty -\log(1-e^{-u})\dd u=\frac{\pi^2}{6}.
\]

\section{Proof of the Balancing Theorem and Optimality}\label{sec:proof-optimality}

\begin{proof}[Proof of \Cref{thm:balancing}, except optimality]
If $t=1$ in the notation of \Cref{thm:exact-gain}, the rectangles are identical. Assume $t>1$. Then $b>A$, and \Cref{thm:exact-gain} gives
\[
  \log\frac{\tau(a,b)}{\tau(A,B)}
  =
  (t-1)(AC_b-bC_A)+\Gamma
\]
with $\Gamma>0$. By \Cref{lem:trap}, applied with $r=A$ and $s=b$,
\[
  \frac{C_b}{b}-\frac{C_A}{A}
  \ge
  \frac{b-A}{Ab}\alpha.
\]
Hence
\[
  AC_b-bC_A
  =
  Ab\left(\frac{C_b}{b}-\frac{C_A}{A}\right)
  \ge
  \alpha(b-A).
\]
Therefore
\[
  \log\frac{\tau(a,b)}{\tau(A,B)}
  \ge
  \alpha(t-1)(b-A).
\]
Finally, since $a=tA$ and $B=tb$,
\[
  (t-1)(b-A)
  =
  (A+B)-(a+b).
\]
This proves the displayed inequality. If the rectangles are distinct, then $t>1$, and the strict inequality follows from $\Gamma>0$.
\end{proof}

For an $r\times s$ rectangle in the infinite grid, the number of outer boundary incidences is $\partial R_{r,s}=2r+2s$.
Thus \Cref{thm:balancing} can also be written as
\[
  \log\tau(a,b)-\log\tau(A,B)
  \ge
  \frac{\alpha}{2}
  \bigl(\partial R_{A,B}-\partial R_{a,b}\bigr).
\]
The side-length formulation avoids ambiguity about boundary conventions.

\begin{proposition}\label{prop:optimality}
The constant $\alpha=\arsinh(1)$ in \Cref{thm:balancing} is optimal.
\end{proposition}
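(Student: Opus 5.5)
The plan is to exhibit an explicit family of balancing moves for which the ratio of the log-gain to the reduction $(A+B)-(a+b)$ tends to $\alpha$. I would take $A=m$, $B=8m$, $a=2m$, $b=4m$ and let $m\to\infty$. Then $AB=ab=8m^2$, $t=2$, and $(A+B)-(a+b)=(t-1)(b-A)=3m$. Since \Cref{thm:balancing} already gives the lower bound $\alpha\cdot 3m$, it suffices to show that the log-gain is at most $3m\,\alpha+O(1)$. Once that is done, no constant larger than $\alpha$ can work.

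\textbf{Step 1: rewrite the main term.} Start from the exact identity of \Cref{thm:exact-gain}:
\[
  \log\frac{\tau(a,b)}{\tau(A,B)}=(t-1)(AC_b-bC_A)+\Gamma .
\]
Substituting $C_r=rI-\alpha-E_r$ from \eqref{eq:C-error} gives
\[
  AC_b-bC_A=\alpha(b-A)+bE_A-AE_b .
\]
By \eqref{eq:C-error-bound}, $E_b\ge0$ and $E_A\le \pi^2/(12c(1)A)$. Hence
\[
  (t-1)(AC_b-bC_A)\le 3m\,\alpha+\frac{4m\,\pi^2}{12c(1)m}=3m\,\alpha+O(1).
\]

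\textbf{Step 2: bound the residual $\Gamma$ by a constant.} This is the step I expect to need the most care, although it should be elementary.
\begin{itemize}
\item Drop the subtracted sum, which is nonnegative by \Cref{lem:G}. This leaves $\Gamma\le\sum_{k=1}^{b-1}G_t(Ac(k/b))$.
\item For $t>1$ we have $G_t(z)\le -\log(1-e^{-2z})$, because the numerator $1-e^{-2tz}$ is less than $1$.
\item Concavity of $c$ with $c(0)=0$ (\Cref{lem:c}) gives $c(x)\ge c(1)x$. Since $-\log(1-e^{-2z})$ is decreasing in $z$, it follows that
\[
  \Gamma\le\sum_{k\ge1}f(k),\qquad f(y)=-\log\bigl(1-e^{-\beta y}\bigr),\quad \beta=\frac{2c(1)A}{b}=\frac{c(1)}{2}.
\]
\item Since $f$ is positive and decreasing, $\sum_{k\ge1}f(k)\le\int_0^\infty f(y)\,dy$. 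Using $\int_0^\infty-\log(1-e^{-u})\,du=\pi^2/6$, this integral equals $\pi^2/(6\beta)=\pi^2/(3c(1))$, which is independent of $m$.
\end{itemize}

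\textbf{Step 3: conclude.} Combining the two steps with the lower bound from \Cref{thm:balancing} gives
\[
  \alpha\le\frac{\log\tau(2m,4m)-\log\tau(m,8m)}{3m}\le\alpha+\frac{K}{m}
\]
for an absolute constant $K$. Now suppose \Cref{thm:balancing} held with some constant $\alpha'>\alpha$. Applying it to this family would give $\alpha'\le\alpha+K/m$ for every $m$, which fails for $m$ large. Therefore $\alpha=\arsinh(1)$ is optimal.

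The only delicate points are the sign bookkeeping in Step 1 (dropping $-AE_b$ needs $E_b\ge0$) and, in Step 2, the use of the integral comparison with the fixed ratio $b/A=4$, which is what keeps $\Gamma=O(1)$.
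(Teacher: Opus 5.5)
Your proof is correct. It uses the same ingredients as the paper:
\begin{itemize}
\item the exact gain identity;
\item the decomposition $C_r=rI-\alpha-E_r$ with $0\le E_r\le\pi^2/(12c(1)r)$;
\item the bound $c(x)\ge c(1)x$ to control the residual.
\end{itemize}
The difference is the witness family, and that changes which estimates are needed.

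The paper takes $(n^2,(n+1)^2)\to(n(n+1),n(n+1))$. Here $t_n=1+1/n\to1$ and the side-sum drop is exactly $1$, so it must show that the log-gain itself converges to $\alpha$. That requires two finer estimates. First, the main-term error $(n+1)E_A-nE_b$ must tend to $0$; this uses the monotonicity $E_A/A\ge E_b/b$ from \Cref{lem:trap} together with $E_A=O(1/n^2)$. Second, the residual must vanish; this uses $G_{1+\delta}(z)\le\delta e^{-z}$, which exploits $t\to1$.

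You instead fix $t=2$ and the aspect ratios, and let the drop $3m$ grow. It then suffices to show that the additive gap $bE_A-AE_b+\Gamma$ is $O(1)$. Your crude bounds give exactly this: you drop $-AE_b$ and the subtracted residual sum, and compare the remaining residual sum with $\int_0^\infty-\log(1-e^{-\beta y})\,dy$.

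Your route is simpler. It also shows that $\alpha$ is the limiting effective coefficient even for a macroscopic bounded-aspect family of moves. This is consistent with \Cref{prop:bounded-aspect-residual}, where the area and corner terms cancel at equal area.

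The paper's route gives a slightly stronger conclusion. Because the drop is fixed at $1$ while the gain tends to $\alpha$, it also rules out any bound of the form $\alpha\bigl((A+B)-(a+b)\bigr)+\varepsilon$ with a fixed $\varepsilon>0$. Your family only shows that the additive gap stays bounded, which is enough for optimality of the multiplicative constant but not for this stronger statement.
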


\begin{proof}
Let
\[
  (A_n,B_n)=\bigl(n^2,(n+1)^2\bigr),
  \qquad
  (a_n,b_n)=\bigl(n(n+1),n(n+1)\bigr).
\]
Then $A_nB_n=a_nb_n$ and
\[
  (A_n+B_n)-(a_n+b_n)=1.
\]
Put $A=n^2$, $b=n(n+1)$, and $t_n=1+1/n$. The exact gain identity gives
\[
  L_n
  :=
  \log\frac{\tau(n(n+1),n(n+1))}
  {\tau(n^2,(n+1)^2)}
  =
  nC_b-(n+1)C_A+\Gamma_n.
\]
Use the $C_r$-error notation \eqref{eq:C-error}. Since $C_m=mI-\alpha-E_m$ and $nb=(n+1)A$,
\[
  nC_b-(n+1)C_A
  =
  \alpha+(n+1)E_A-nE_b.
\]
The monotonicity estimate \eqref{eq:C-trapezoidal}, equivalently $T_b\ge T_A$, gives $E_A/A\ge E_b/b$. Since $A=n^2$ and $b=n(n+1)$, this implies $(n+1)E_A\ge nE_b$. Therefore the term $(n+1)E_A-nE_b$ is nonnegative, and the error bound \eqref{eq:C-error-bound} gives
\[
  0\le (n+1)E_A-nE_b
  \le
  (n+1)E_A
  \le
  \frac{\pi^2(n+1)}{12c(1)n^2}
  \to0.
\]
Thus $nC_b-(n+1)C_A\to\alpha$.

It remains to show $\Gamma_n\to0$. Write $t_n=1+\delta_n$ with $\delta_n=1/n$. For $z>0$,
\[
  G_{1+\delta_n}(z)
  =
  \int_1^{1+\delta_n}\frac{2z}{e^{2vz}-1}\dd v
  \le
  \delta_n e^{-z},
\]
because
\[
  \frac{2z}{e^{2z}-1}
  =
  e^{-z}\frac{z}{\sinh z}
  \le e^{-z}.
\]
By concavity of $c$ and $c(0)=0$, we have $c(x)\ge c(1)x$. Hence, for the first sum in $\Gamma_n$,
\[
  A c(k/b)
  \ge
  n^2c(1)\frac{k}{n(n+1)}
  =
  c(1)\frac{n}{n+1}k
  \ge
  \frac{c(1)}2 k,
\]
and for the second sum,
\[
  b c(j/A)
  \ge
  n(n+1)c(1)\frac{j}{n^2}
  =
  c(1)\left(1+\frac1n\right)j
  \ge
  c(1)j.
\]
Therefore
\[
  |\Gamma_n|
  \le
  \frac1n\sum_{k\ge1}e^{-c(1)k/2}
  +
  \frac1n\sum_{j\ge1}e^{-c(1)j}
  =
  O(1/n),
\]
so $\Gamma_n\to0$. Hence $L_n\to\alpha$. Since the side-sum drop in this sequence is exactly $1$, no universal constant larger than $\alpha$ can satisfy the balancing inequality for all equal-area rectangular moves.
\end{proof}

For comparison, \Cref{app:concavity-constant} gives a direct concavity proof of the weaker balancing constant $\alpha_0=c(1)-\int_0^1 c(x)\,dx$; the proposition above shows why the sharp universal constant is instead $\alpha=c(1)/2=\arsinh(1)$.

\subsection*{A bounded-aspect asymptotic from the residual}

The balancing theorem uses only positivity of the residual. In bounded-aspect regimes, however, the same single-rectangle decomposition retains finer finite-size information. The following consequence recovers the classical area and boundary terms and the logarithmic corner correction, leaving only a bounded remainder.

\begin{proposition}\label{prop:bounded-aspect-residual}
Fix $M\ge1$. Uniformly for integers $2\le r\le s\le Mr$,
\[
  \log\tau(r,s)
  =
  Irs-\alpha(r+s)-\frac14\log(rs)+O_M(1)
  =
  \frac{4G_{\mathrm{Cat}}}{\pi}rs-\log(1+\sqrt2)(r+s)-\frac14\log(rs)+O_M(1).
\]
In particular, the classical square-grid area, boundary, and logarithmic corner terms are recovered:
\[
  \log\tau(n,n)
  =
  \frac{4G_{\mathrm{Cat}}}{\pi}n^2-2\log(1+\sqrt2)n-\frac12\log n+O(1),
\]
where $G_{\mathrm{Cat}}$ denotes Catalan's constant.
\end{proposition}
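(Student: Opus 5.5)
We write $\log\tau(r,s)$ through the one-dimensional hyperbolic form and then separate the main term from the residual, as in \Cref{thm:exact-gain}. By \eqref{eq:tau-q}, \eqref{eq:lambda-c} and \Cref{lem:hyperbolic}, with $z_j=c(j/r)$,
\[
  \log\tau(r,s)=\sum_{j=1}^{r-1}\log\frac{\sinh(s z_j)}{\sinh z_j}.
\]
For each $j$ we use
\[
  \log\frac{\sinh(sz)}{\sinh z}=(s-1)z+\log(1-e^{-2sz})-\log(1-e^{-2z}).
\]
Summing over $j$ gives
\[
  \log\tau(r,s)=(s-1)C_r+\sum_{j=1}^{r-1}\log(1-e^{-2sz_j})-\sum_{j=1}^{r-1}\log(1-e^{-2z_j}).
\]

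\emph{Main term.} By \eqref{eq:C-error}, $C_r=rI-\alpha-E_r$ with $0\le E_r\le \pi^2/(12c(1)r)$ by \eqref{eq:C-error-bound}. Hence
\[
  (s-1)C_r=Irs-Ir-\alpha s+\alpha-(s-1)E_r .
\]
Since $s\le Mr$, the term $(s-1)E_r$ is $O_M(1)$. The linear term in $r$ is still $-Ir$, and we need it to become $-\alpha r$. This correction has to come from the last sum, which is a Riemann-type sum of a function with a logarithmic singularity at $x=0$.

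\emph{The sum $S_1=\sum_{j=1}^{r-1}\log(1-e^{-2c(j/r)})$.} We split the summand as $\log(2c(x))+g(x)$, where $g(x)=\log\bigl((1-e^{-2c(x)})/(2c(x))\bigr)$ is smooth on $[0,1]$. We then write $\log c(x)=\log x+\log(c(x)/x)$, and $\log(c(x)/x)$ is also smooth because $c(x)/x\to\pi$ as $x\to0$. For the smooth parts, the trapezoidal rule (Euler--Maclaurin) gives
\[
  \sum_{j=1}^{r-1}\phi(j/r)=r\int_0^1\phi-\frac{\phi(0)+\phi(1)}{2}+O(1/r).
\]
For the singular part, Stirling's formula gives
\[
  \sum_{j=1}^{r-1}\log(j/r)=\log\bigl((r-1)!\bigr)-(r-1)\log r=-r+\tfrac12\log r+O(1).
\]
Collecting these, $S_1=r\int_0^1\log(1-e^{-2c(x)})\,dx+\tfrac12\log r+O(1)$.

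\emph{Evaluating the integral.} We must show that $\int_0^1\log(1-e^{-2c})\,dx=I-\alpha$. Equivalently, we need the identity $\frac1\pi\int_0^\pi\log\bigl(2\sinh\gamma(\theta)\bigr)\,d\theta=\alpha$. This follows from
\[
  2\sinh\gamma(\theta)=2\sqrt{(1-\cos\theta)(3-\cos\theta)}
\]
together with the standard integral $\frac1\pi\int_0^\pi\log(a-\cos\theta)\,d\theta=\log\frac{a+\sqrt{a^2-1}}{2}$ used in \Cref{lem:trap}. With $a=1$ and $a=3$, and the factor $\log 2$ from the prefactor, the total is $\log 2+\tfrac12\bigl(\log\tfrac12+\log\tfrac{3+2\sqrt2}{2}\bigr)=\log(1+\sqrt2)$. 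Subtracting $S_1$ therefore gives $-(I-\alpha)r-\tfrac12\log r$. Combined with $-Ir$ from the main term, the linear coefficient in $r$ becomes $-\alpha r$, as required, and the logarithm contributes $-\tfrac12\log r$. This precise bookkeeping of the Euler--Maclaurin and Stirling constants is the main obstacle; the constant terms only need to be $O(1)$.

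\emph{The sum $S_s=\sum_{j=1}^{r-1}\log(1-e^{-2sc(j/r)})$.} Since $c(x)\ge c(1)x$, the summand is essentially $\log(1-e^{-2\pi sj/r})$ for small $j$, and it is exponentially small once $j\gg r/s$. Because $r/s\in[1/M,1]$, the sum equals a convergent series in $j$ with bounded ratio $s/r$, so $S_s=O_M(1)$. It is negative and bounded uniformly, since the $j=1$ term is bounded below by $\log(1-e^{-2c(1)/M})$.

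\emph{Combining and reconciling the logarithmic term.} Adding the pieces gives $Irs-\alpha(r+s)-\tfrac12\log r+O_M(1)$. Since $\log s=\log r+O_M(1)$, we have $\tfrac12\log r=\tfrac14\log(rs)+O_M(1)$, which yields the stated formula. For the Catalan form, we use $I=4G_{\mathrm{Cat}}/\pi$; this should follow from \eqref{eq:gamma-integral}, which gives $I=\log2+\frac{1}{\pi^2}\iint\log(2-\cos\theta-\cos\phi)$, the classical Kasteleyn--Temperley value. The square case is the specialization $r=s=n$.
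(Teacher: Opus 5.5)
Your argument follows the paper's proof. It uses the same split of $\log\tau(r,s)$ into $(s-1)C_r$, a numerator residual bounded via $c(x)\ge c(1)x$, and a denominator residual handled by Stirling plus a Riemann sum for the smooth part. Your evaluation of $\int_0^1\log(1-e^{-2c})$ is equivalent to the paper's, since your $2\sinh\gamma(\pi x)$ is the paper's $4u\sqrt{1+u^2}$.

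There is a sign slip in that evaluation step. Because $0<1-e^{-2c}<1$, one has $\int_0^1\log(1-e^{-2c(x)})\,dx=-I+\frac1\pi\int_0^\pi\log(2\sinh\gamma(\theta))\,d\theta=\alpha-I<0$, not $I-\alpha$. So $-S_1$ contributes $+(I-\alpha)r-\tfrac12\log r$, and it is $-Ir+(I-\alpha)r$ that equals $-\alpha r$; as you wrote it, $-Ir-(I-\alpha)r=-(2I-\alpha)r$.

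Also, you only cite $I=4G_{\mathrm{Cat}}/\pi$, whereas the paper derives it directly from $K(a)=\int_0^{\pi/2}\arsinh(a\sin\theta)\,d\theta$ and $K'(a)=\arctan(a)/a$.
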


The coefficients in this proposition are not new as asymptotic constants. The area and boundary terms are classical in rectangular-grid spanning-tree asymptotics~\cite{Temperley1974,Wu1977}; the logarithmic corner correction appears in the finite-size correction literature for free-boundary rectangles. The point here is different: the same residual decomposition used to prove the exact finite balancing theorem also recovers these known terms, up to a bounded remainder, without invoking the full fixed-aspect expansion. The underlying finite product formula is the two-dimensional free-boundary case of the hypercubic-lattice formula of Tzeng and Wu~\cite{TzengWu2000}. Izmailian, Kenna, Guo, and Wu~\cite{IzmailianKennaGuoWu2014} compute the sharper fixed-aspect expansion, including the constant term; in their free-energy convention $F=-\log Z$, their bulk, surface, and corner contributions translate to the $Irs$, $-\alpha(r+s)$, and $-\frac14\log(rs)$ terms above. The square specialization is also consistent with the finite-size correction formulas of Izmailian and Kenna~\cite{IzmailianKenna2015}. We do not identify the remaining constant term here.

Even such a fixed-aspect refinement would not replace the finite balancing proof. If two rectangles have the same area, then the area and logarithmic corner terms cancel on subtracting the asymptotic formulas, and the boundary term favors the smaller side sum. The remaining fixed-aspect constant, however, need not be small, while some finite balancing moves have only a bounded side-sum drop. The balancing theorem therefore uses the exact gain identity and the finite positivity of the residual, rather than only this asymptotic information.

\begin{proof}
We use the same hyperbolic residual mechanism as in the gain identity, but for a single rectangle rather than for a ratio, so the numerator and denominator residual sums are kept separate.
The one-dimensional formula obtained from \eqref{eq:tau-q}, \eqref{eq:lambda-c}, and \Cref{lem:hyperbolic} gives, for $r\le s$,
\[
  \log\tau(r,s)
  =
  \sum_{k=1}^{r-1}
  \left[
  (s-1)c(k/r)
  +\log\left(1-e^{-2s c(k/r)}\right)
  -\log\left(1-e^{-2c(k/r)}\right)
  \right].
\]
Thus
\begin{equation}\label{eq:bounded-logtau-split}
  \log \tau(r,s)
  =
  (s-1)C_r
  +
  \sum_{k=1}^{r-1}\log\left(1-e^{-2s c(k/r)}\right)
  +
  \sum_{k=1}^{r-1}F(k/r),
\end{equation}
where
\[
  F(x)=-\log\left(1-e^{-2c(x)}\right).
\]
By \eqref{eq:C-error} and \eqref{eq:C-error-bound},
\[
  C_r=rI-\alpha-E_r,
  \qquad
  0\le E_r=O(1/r).
\]
Hence, uniformly for $s\le Mr$,
\begin{equation}\label{eq:bounded-main-term}
  (s-1)C_r
  =
  rsI-rI-\alpha s+O_M(1).
\end{equation}

Next, consider the numerator residual. By concavity of $c$ and $c(0)=0$, we have the explicit lower bound $c(x)\ge c(1)x$ on $[0,1]$. Taking $\beta=c(1)>0$, we have for $1\le k\le r-1$ that
\[
  sc(k/r)\ge rc(k/r)\ge\beta k.
\]
Every term in the numerator residual is nonpositive, and
\[
  \sum_{k=1}^{r-1}\log\left(1-e^{-2s c(k/r)}\right)
  \ge
  \sum_{k=1}^{\infty}\log\left(1-e^{-2\beta k}\right)
  >-\infty.
\]
Thus, uniformly for $s\ge r$,
\begin{equation}\label{eq:bounded-numerator-residual}
  \sum_{k=1}^{r-1}\log\left(1-e^{-2s c(k/r)}\right)=O(1).
\end{equation}

It remains to estimate the denominator residual. Since
\[
  c(x)=\pi x+O(x^3)
  \qquad (x\downarrow0),
\]
we have
\[
  F(x)=-\log x+h(x),
\]
where $h$ extends to a $C^1$ function on $[0,1]$, because $1-e^{-2c(x)}$ has a simple zero at $x=0$ with derivative $2\pi$. We use the following elementary logarithmic Riemann-sum estimate: if
\[
  f(x)=-\log x+h(x),
  \qquad h\in C^1[0,1],
\]
then
\begin{equation}\label{eq:log-riemann}
  \sum_{k=1}^{r-1}f(k/r)
  =
  r\int_0^1f(x)\dd x-\frac12\log r+O(1).
\end{equation}
Indeed,
\[
  \sum_{k=1}^{r-1}-\log(k/r)
  =
  (r-1)\log r-\log((r-1)!)
  =
  r-\frac12\log r+O(1)
\]
by Stirling's formula, while
\[
  \sum_{k=1}^{r-1}h(k/r)=r\int_0^1h(x)\dd x+O(1)
\]
by the ordinary Riemann-sum estimate for $C^1$ functions. Applying \eqref{eq:log-riemann} to $F$ gives
\begin{equation}\label{eq:bounded-denominator-residual}
  \sum_{k=1}^{r-1}F(k/r)=rJ-\frac12\log r+O(1),
  \qquad
  J:=\int_0^1F(x)\dd x.
\end{equation}

We now evaluate $J$. Put
\[
  \theta=\frac{\pi x}{2},
  \qquad
  u=\sin\theta,
  \qquad
  c(x)=2\arsinh u.
\]
If $a=\arsinh u$, then $c=2a$, and
\[
  1-e^{-2c}=1-e^{-4a}
  =
  4u\sqrt{1+u^2}\,e^{-c}.
\]
Therefore
\[
  F(x)=c(x)-\log\left(4\sin\theta\sqrt{1+\sin^2\theta}\right).
\]
Consequently,
\[
  J
  =
  I-\frac2\pi\int_0^{\pi/2}
  \log\left(4\sin\theta\sqrt{1+\sin^2\theta}\right)\dd\theta.
\]
Using
\[
  \int_0^{\pi/2}\log\sin\theta\dd\theta
  =
  -\frac{\pi}{2}\log2
\]
and
\[
  \int_0^{\pi/2}\log(1+\sin^2\theta)\dd\theta
  =
  \pi\log\frac{1+\sqrt2}{2},
\]
we obtain
\[
  \int_0^{\pi/2}
  \log\left(4\sin\theta\sqrt{1+\sin^2\theta}\right)\dd\theta
  =
  \frac{\pi}{2}\log(1+\sqrt2)
  =
  \frac{\pi}{2}\alpha.
\]
Thus
\begin{equation}\label{eq:F-integral}
  J=I-\alpha.
\end{equation}
Combining \eqref{eq:bounded-logtau-split}, \eqref{eq:bounded-main-term}, \eqref{eq:bounded-numerator-residual}, \eqref{eq:bounded-denominator-residual}, and \eqref{eq:F-integral} gives, uniformly for $2\le r\le s\le Mr$,
\[
  \log \tau(r,s)
  =
  rsI-rI-\alpha s+r(I-\alpha)-\frac12\log r+O_M(1)
  =
  Irs-\alpha(r+s)-\frac14\log(rs)+O_M(1),
\]
because $1\le s/r\le M$.
Finally,
\[
  I=\frac4\pi\int_0^{\pi/2}\arsinh(\sin\theta)\dd\theta.
\]
If $K(a)=\int_0^{\pi/2}\arsinh(a\sin\theta)\dd\theta$, then
\[
  K'(a)=\int_0^{\pi/2}\frac{\sin\theta}{\sqrt{1+a^2\sin^2\theta}}\dd\theta
  =
  \frac{\arctan a}{a}.
\]
Therefore $K(1)=\int_0^1(\arctan a)/a\dd a=G_{\mathrm{Cat}}$, giving the equivalent form.
\end{proof}

For a nontrivial balancing move $A\times B\to a\times b$, define its effective coefficient by
\[
  \alpha_{\mathrm{eff}}(A,B;a,b)
  =
  \frac{\log\tau(a,b)-\log\tau(A,B)}
  {(A+B)-(a+b)}.
\]
By \Cref{thm:balancing}, every such coefficient is at least $\alpha$, and \Cref{prop:optimality} shows that the infimum over all nontrivial equal-area rectangular balancing moves is $\alpha$. More generally, for any finite equal-area balancing sequence
\[
  R_0\to R_1\to\cdots\to R_m,
\]
the same lower bound holds after summing the one-step inequalities: the total log-gain is at least $\alpha$ times the total side-sum drop. For the direct move $1\times n^2\to n\times n$,
\[
  \alpha_{\square}(n)
  =
  \frac{\log\tau(n,n)}{(n-1)^2}
  =
  I+O(1/n),
\]
by \Cref{prop:bounded-aspect-residual}. Thus the same exact decomposition gives both the sharp worst-case balancing constant $\alpha$ and near-tight asymptotics for bounded-aspect rectangles through the residual; in particular, the shape-dependent coefficient for the path-to-square comparison tends to the square-lattice entropy $I$, even though the worst-case universal infimum remains $\alpha$.

\begin{corollary}\label{cor:square-max}
Among rectangular grid graphs with area $N$, the value $\tau(d,N/d)$ is strictly increasing as $d$ ranges over divisors $d\le\sqrt N$. Hence the unique maximizing rectangle, up to rotation, is
\[
  d_*\times \frac{N}{d_*},
  \qquad
  d_*=\max\{d:d\mid N,\ d\le\sqrt N\}.
\]
In particular, if $N=n^2$, the unique maximizing rectangle is $n\times n$.
\end{corollary}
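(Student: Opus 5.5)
The plan is to obtain the corollary directly from \Cref{thm:balancing} by applying it to consecutive divisor rectangles. First, fix the divisors of $N$ lying in $[1,\sqrt N]$, listed as $d_1<d_2<\cdots<d_m$. For $d_i<d_{i+1}\le\sqrt N$, put
\[
A=d_i,\quad B=N/d_i,\quad a=d_{i+1},\quad b=N/d_{i+1}.
\]
Then $AB=ab=N$, and $A<a\le\sqrt N\le b<B$, since $d\le\sqrt N$ implies $N/d\ge\sqrt N$ and $N/d$ is decreasing in $d$. Thus the hypotheses $A\le a\le b\le B$ of \Cref{thm:balancing} hold, and the two rectangles are distinct because $A<a$.

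Next, \Cref{thm:balancing} gives
\[
\log\tau(a,b)-\log\tau(A,B)>\arsinh(1)\bigl((A+B)-(a+b)\bigr)\ge0,
\]
where the side-sum drop is nonnegative (indeed positive) because $x\mapsto x+N/x$ is strictly decreasing on $(0,\sqrt N]$. Only the strict inequality is needed, so $\tau(d_{i+1},N/d_{i+1})>\tau(d_i,N/d_i)$. Chaining these steps shows that $\tau(d,N/d)$ is strictly increasing over the divisors $d\le\sqrt N$. (Comparisons need not be consecutive: any two divisors $d<d'\le\sqrt N$ satisfy the hypotheses directly.)

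For the maximizer, note that every rectangle of area $N$ has the form $d\times N/d$ with $d\mid N$. Up to rotation, since $\tau(r,s)=\tau(s,r)$ by symmetry of the product formula in \Cref{prop:product}, we may take the shorter side $d\le\sqrt N$. Strict monotonicity then makes $d_*$, the largest such divisor, the unique maximizer up to rotation. If $N=n^2$, then $d_*=n$, so the maximizer is $n\times n$.

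There is no real obstacle here. The only point needing care is that uniqueness "up to rotation" requires reducing every rectangle to one with $d\le\sqrt N$. Distinct divisors $d\le\sqrt N$ give non-congruent rectangles, so strictness from \Cref{thm:balancing} suffices. The square case $d=\sqrt N$ fits the same argument, with $a=b$ allowed.
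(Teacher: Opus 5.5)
Your proposal is correct and follows the paper's proof: both set $A=d_1$, $B=N/d_1$, $a=d_2$, $b=N/d_2$ for divisors $d_1<d_2\le\sqrt N$, check $A<a\le b<B$, and invoke the strict case of \Cref{thm:balancing}. Your extra remarks on reducing every rectangle to shorter side $d\le\sqrt N$ via $\tau(r,s)=\tau(s,r)$ just make explicit the uniqueness-up-to-rotation step that the paper leaves implicit.
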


\begin{proof}
Let $d_1<d_2\le\sqrt N$ be divisors of $N$. Set
\[
  A=d_1,\quad B=N/d_1,\quad a=d_2,\quad b=N/d_2.
\]
Then $AB=ab=N$ and $A<a\le b<B$. By \Cref{thm:balancing},
\[
  \log\tau(d_2,N/d_2)-\log\tau(d_1,N/d_1)>0.
\]
\end{proof}

\section{Discussion}\label{sec:discussion}

The argument uses the Cartesian-product spectrum of $P_r\square P_s$ in an essential way. After the one-dimensional factors cancel the Kirchhoff normalization, the remaining interaction product can be reorganized through the Chebyshev identity for $q_r$. The exact gain identity then separates the comparison into a main term controlled by a monotone trapezoidal identity for the special hyperbolic function $c$ and a residual term controlled by monotone matching. The optimal constant is $\alpha=\arsinh(1)=c(1)/2$.
The residual term is used only through positivity for the balancing theorem, but in bounded-aspect regimes it contains finer asymptotic information, as illustrated by the rectangular asymptotic above.

At the level of evaluation and decomposition, the hyperbolic reduction is broader than rectangles. If $H$ is a connected graph on $m$ vertices with positive Laplacian eigenvalues $\mu_1,\ldots,\mu_{m-1}$, then the same cancellation of the Kirchhoff normalization gives
\[
  \tau(P_r\square H)
  =
  \tau(H)\prod_{i=1}^{m-1}
  \frac{\sinh(rz_i)}{\sinh z_i},
  \qquad
  z_i=\arcosh(1+\mu_i/2).
\]
Thus the method gives one-dimensional evaluations not only for rectangles but also for cylindrical grids, obtained by taking $H$ to be a cycle graph. Periodic grids, or rectangular tori, similarly have separable spectra and analogous one-dimensional formulas. These extensions are computational rather than extremal: the balancing theorem uses additional structure special to path spectra, especially the monotonicity and concavity of $c(x)=\arcosh(2-\cos\pi x)$, together with the boundary scale measured by side-sum reduction. For tori there is no corresponding boundary term, and any analogue would require a different folded-spectrum comparison, likely involving aspect ratio rather than rectangular boundary reduction. Arbitrary induced grid subgraphs lack the product spectrum, so the method does not directly apply there; this does not rule out approximation schemes based on product-like decompositions or spectral comparison, but such estimates would require separate error control.

\Cref{conj:induced-grid} remains open. A simple extremal calculation points in the same direction but is too coarse to settle spanning-tree maximality: the square is already extremal for edge count and cycle rank.

Let $S\subset\Z^2$ have $|S|=n^2$, and suppose that the induced graph $G=\mathcal L[S]$ is connected. Let $E=|E(G)|$, and let
\[
  \rho(G)=|E(G)|-|V(G)|+1=E-n^2+1
\]
be its cycle rank. Then
\[
  E\le 2n(n-1),
  \qquad
  \rho(G)\le (n-1)^2,
\]
with equality only for the square $R_{n,n}$ up to translation.

Here is the short proof. Let $w$ and $h$ be the numbers of occupied columns and rows. Since $G$ is connected, its projections onto the horizontal and vertical axes are intervals, so the occupied bounding box contains $wh$ lattice points and $wh\ge n^2$. Let $\beta$ be the number of ordered pairs $(v,u)$ such that $v\in S$, $u\notin S$, and $u$ is a grid-neighbor of $v$. In each occupied row, the leftmost and rightmost occupied vertices have horizontal neighbors outside $S$, and similarly each occupied column contributes at least two vertical boundary incidences. Thus
\[
  \beta\ge 2w+2h\ge4\sqrt{wh}\ge4n.
\]
The identity $4n^2=2E+\beta$ follows by summing the four incident lattice directions over all vertices in $S$: internal edges are counted twice, while boundary incidences are counted once. Hence $E\le2n(n-1)$ and $\rho(G)\le(n-1)^2$. Equality forces $w=h=n$ and $wh=n^2$, so $S$ fills its $n\times n$ bounding box.

Thus the square is extremal for some coarse graph parameters. These parameters alone cannot prove spanning-tree maximality, since edge count and cycle rank do not determine the number of spanning trees. The rectangular family admits a stronger statement: exact finite-size comparisons, the optimal balancing constant, and bounded-aspect asymptotic information from the residual.

What is missing for arbitrary induced subgraphs is not merely a sharper perimeter estimate. The rectangular proof uses the product spectrum of $P_r\square P_s$ to turn the full spanning-tree ratio into a one-dimensional hyperbolic comparison, with a monotone main term and a positive matched residual. General induced subgraphs have no comparable separable spectrum, so the rectangular residual argument has no immediate analogue beyond product families. Thus \Cref{conj:induced-grid} remains a natural benchmark for understanding how geometry influences spanning-tree counts in grid graphs.

\bibliographystyle{alpha}
\bibliography{ref}

\appendix

\section{A Concavity Constant}\label{app:concavity-constant}

This appendix records the concavity-based constant used in an earlier version of the argument. The proof is included for intuition: a direct grid-insertion concavity estimate already implies a universal balancing inequality, although with a weaker balancing constant than the sharp one proved in the main text. The constant has the simple form
\[
  \alpha_0
  :=
  c(1)-\int_0^1 c(x)\dd x
  =
  c(1)-I
  =
  2\arsinh(1)-\frac{4G_{\mathrm{Cat}}}{\pi}
  \approx
  0.59650.
\]
The main theorem uses the sharper constant $\alpha=c(1)/2=\arsinh(1)$.

The elementary input is the following normalized-grid comparison for concave functions.

For a function $g:[0,1]\to\mathbb R$, define
\[
  M_r(g)=\frac1r\sum_{j=1}^{r-1}g(j/r).
\]

\begin{lemma}\label{lem:app-grid-insertion}
Let $g:[0,1]\to\mathbb R$ be concave and satisfy $g(0)=0$. For every $r\ge1$,
\[
  M_{r+1}(g)-M_r(g)
  \ge
  \frac{1}{r(r+1)}
  \left(g(1)-\int_0^1g(x)\dd x\right).
\]
Consequently, for $1\le r\le s$,
\[
  M_s(g)-M_r(g)
  \ge
  \frac{s-r}{rs}
  \left(g(1)-\int_0^1g(x)\dd x\right).
\]
\end{lemma}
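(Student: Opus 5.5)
The idea is to compare the two grids $\{j/r\}$ and $\{j/(r+1)\}$ by interleaving them and using concavity pointwise. Write $S_r=\sum_{j=1}^{r-1}g(j/r)$, so that $M_r(g)=S_r/r$ and
\[
  r(r+1)\bigl(M_{r+1}(g)-M_r(g)\bigr)=rS_{r+1}-(r+1)S_r .
\]
The claim is then equivalent to $rS_{r+1}-(r+1)S_r\ge g(1)-\int_0^1g$.

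\emph{Step 1 (interleaving).} For $1\le j\le r$, the point $j/(r+1)$ lies between $(j-1)/r$ and $j/r$. Explicitly,
\[
  \frac{j}{r+1}=\frac{j}{r+1}\cdot\frac{j-1}{r}+\frac{r+1-j}{r+1}\cdot\frac{j}{r}.
\]
Concavity therefore gives
\[
  (r+1)g\bigl(j/(r+1)\bigr)\ge j\,g\bigl((j-1)/r\bigr)+(r+1-j)\,g(j/r).
\]
Sum over $j=1,\dots,r$ and use $g(0)=0$. Each interior value $g(k/r)$, $1\le k\le r-1$, then appears with total coefficient $(k+1)+(r+1-k)=r+2$, and the endpoint $g(1)$ appears with coefficient $1$. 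This yields
\[
  (r+1)S_{r+1}\ge(r+2)S_r+g(1).
\]

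\emph{Step 2 (rearrange).} Substitute this lower bound for $S_{r+1}$ and use $r(r+2)-(r+1)^2=-1$:
\[
  rS_{r+1}-(r+1)S_r\ge\frac{r\,g(1)-S_r}{r+1}.
\]
It therefore suffices to show $(r+1)\int_0^1g\ge S_r+g(1)$. This needs two standard facts about concave $g$.
\begin{itemize}
\item The trapezoidal rule underestimates the integral, which gives $r\int_0^1g\ge S_r+g(1)/2$.
\item The chord bound $g(x)\ge x\,g(1)$, which uses $g(0)=0$, gives $\int_0^1g\ge g(1)/2$.
\end{itemize}
Adding these two inequalities gives exactly the required bound. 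This proves the one-step inequality.

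\emph{Step 3 (telescoping).} For $1\le r\le s$, sum the one-step bound over $m=r,\dots,s-1$. Since
\[
  \sum_{m=r}^{s-1}\frac{1}{m(m+1)}=\frac1r-\frac1s=\frac{s-r}{rs},
\]
this gives the stated consequence. The telescoping identity is exact, so the sign of $g(1)-\int_0^1 g$ plays no role.

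The only genuinely non-routine point is Step 1: one must choose the interleaving so that concavity bounds the finer grid from below in terms of the coarser one, which is the direction needed. The opposite interleaving, writing $k/r$ as a convex combination of neighboring $(r+1)$-grid points, gives an inequality in the useless direction. After Step 1, the remaining slack is absorbed by the two elementary concavity facts in Step 2.
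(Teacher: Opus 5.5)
Your proof is correct and follows the paper's argument essentially step for step: the same interleaving $j/(r+1)\in[(j-1)/r,\,j/r]$ giving $(r+1)S_{r+1}\ge(r+2)S_r+g(1)$, the same reduction to $(r+1)\int_0^1 g\ge S_r+g(1)$, and the same telescoping. The only cosmetic difference is how you close that last inequality: you add the trapezoidal bound to the integrated chord bound $\int_0^1 g\ge g(1)/2$, whereas the paper applies the chord bound at the grid points to get $S_r\ge (r-1)g(1)/2$ and compares directly, and the two are equivalent.
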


\begin{proof}
Set $x_k=k/r$ and $y_j=j/(r+1)$. For $j=1,\ldots,r$, the point $y_j$ lies in $[x_{j-1},x_j]$, and
\[
  y_j
  =
  \frac{j}{r+1}x_{j-1}
  +
  \frac{r+1-j}{r+1}x_j.
\]
By concavity,
\[
  g(y_j)
  \ge
  \frac{j}{r+1}g(x_{j-1})
  +
  \frac{r+1-j}{r+1}g(x_j).
\]
Summing over $j=1,\ldots,r$ and using $g(0)=0$ gives
\[
  \sum_{j=1}^r g\left(\frac{j}{r+1}\right)
  \ge
  \frac{r+2}{r+1}\sum_{k=1}^{r-1}g\left(\frac{k}{r}\right)
  +
  \frac{1}{r+1}g(1).
\]
After dividing by $r+1$,
\[
  M_{r+1}(g)
  \ge
  \frac{r(r+2)}{(r+1)^2}M_r(g)
  +
  \frac{g(1)}{(r+1)^2}.
\]
Hence
\begin{equation}\label{eq:app-grid-insertion-step}
  M_{r+1}(g)-M_r(g)
  \ge
  \frac{g(1)-M_r(g)}{(r+1)^2}.
\end{equation}

Next we prove
\begin{equation}\label{eq:app-grid-insertion-integral}
  \int_0^1g(x)\dd x
  \ge
  \frac{1}{r+1}\sum_{j=1}^r g(j/r).
\end{equation}
Concavity implies that $g$ lies above its piecewise linear interpolation on the grid $0,1/r,\ldots,1$, so
\[
  \int_0^1g(x)\dd x
  \ge
  \frac1r\left(\sum_{j=1}^{r-1}g(j/r)+\frac12g(1)\right).
\]
Also, since $g(0)=0$, concavity gives
\[
  g(j/r)\ge \frac jr g(1),
  \qquad j=1,\ldots,r-1.
\]
Writing $S=\sum_{j=1}^{r-1}g(j/r)$, we get $S\ge (r-1)g(1)/2$. Therefore
\[
  \frac1r\left(S+\frac12g(1)\right)
  -
  \frac{S+g(1)}{r+1}
  =
  \frac{S-\frac{r-1}{2}g(1)}{r(r+1)}
  \ge 0,
\]
which proves \eqref{eq:app-grid-insertion-integral}. Rearranging \eqref{eq:app-grid-insertion-integral} gives
\[
  g(1)-M_r(g)
  \ge
  \frac{r+1}{r}
  \left(g(1)-\int_0^1g(x)\dd x\right).
\]
Combining this with \eqref{eq:app-grid-insertion-step} proves the one-step inequality.

Summing the one-step inequality from $r$ to $s-1$ gives
\[
  M_s(g)-M_r(g)
  \ge
  \left(\sum_{m=r}^{s-1}\frac{1}{m(m+1)}\right)
  \left(g(1)-\int_0^1g(x)\dd x\right),
\]
and the telescoping sum is $(s-r)/(rs)$.
\end{proof}

\begin{corollary}\label{cor:app-C-balancing}
For $1\le r\le s$,
\begin{equation}\label{eq:app-C-balancing}
  \frac{C_s}{s}-\frac{C_r}{r}
  \ge
  \frac{s-r}{rs}\alpha_0,
  \qquad
  \alpha_0=c(1)-I.
\end{equation}
\end{corollary}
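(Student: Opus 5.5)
This follows directly from \Cref{lem:app-grid-insertion} applied to $g=c$. By \Cref{lem:c}, $c$ is concave on $[0,1]$ and satisfies $c(0)=0$, so the hypotheses of the lemma hold. For this choice of $g$,
\[
  M_r(c)=\frac1r\sum_{j=1}^{r-1}c(j/r)=\frac{C_r}{r},
\]
so the left-hand side of the lemma's consequence is exactly $C_s/s-C_r/r$. On the right-hand side, $\int_0^1 c(x)\dd x=I$, which gives the factor $c(1)-I=\alpha_0$.

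Substituting these identifications into the second displayed inequality of \Cref{lem:app-grid-insertion} yields \eqref{eq:app-C-balancing} for all $1\le r\le s$. The degenerate case $r=s$ is trivially an equality $0\ge0$. For $r=1$ we have $C_1=0$, and the lemma still applies, since it is stated for every $r\ge1$.

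There is no real obstacle here; the only thing to confirm is that the normalization of $M_r$ matches $C_r/r$. Optionally, one can note that $\alpha_0>0$, since concavity gives $I\le c(1)$ with strict inequality because $c$ is not constant. Positivity is not needed for the inequality itself, but it shows the corollary gives a genuine balancing constant. Feeding it through the proof of \Cref{thm:balancing} in place of \eqref{eq:C-trapezoidal} gives the weaker bound with $\alpha_0$.
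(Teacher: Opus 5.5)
Your proof is correct and is the same as the paper's: apply \Cref{lem:app-grid-insertion} to $g=c$, with \Cref{lem:c} supplying concavity and $c(0)=0$, and note that $M_r(c)=C_r/r$ and $\int_0^1 c(x)\dd x=I$. One small slip in your optional remark: $I\le c(1)$ comes from $c$ being increasing, not from concavity. Concavity with $c(0)=0$ gives only the lower bound $I\ge c(1)/2$.
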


\begin{proof}
Apply \Cref{lem:app-grid-insertion} to $g=c$. By \Cref{lem:c}, the function $c$ is concave and satisfies $c(0)=0$, and by definition $M_r(c)=C_r/r$.
\end{proof}

\begin{lemma}\label{lem:app-alpha-zero}
The comparison constant has the closed form
\[
  \alpha_0
  =
  2\arsinh(1)-\frac{4G_{\mathrm{Cat}}}{\pi}
  =
  \log(3+2\sqrt{2})-\frac{4G_{\mathrm{Cat}}}{\pi}
  \approx 0.59650,
\]
where
\[
  G_{\mathrm{Cat}}=\sum_{n\ge0}\frac{(-1)^n}{(2n+1)^2}
\]
is Catalan's constant.
\end{lemma}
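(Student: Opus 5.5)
The statement has two parts: $2\arsinh(1)=\log(3+2\sqrt2)$, and $I=4G_{\mathrm{Cat}}/\pi$. The decimal value then follows from $c(1)\approx1.76275$ and $4G_{\mathrm{Cat}}/\pi\approx1.16624$.

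The first part is elementary. By \Cref{lem:c}, $c(1)=2\arsinh(1)=2\log(1+\sqrt2)$. Since $(1+\sqrt2)^2=3+2\sqrt2$, this equals $\log(3+2\sqrt2)$.

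For $I$, I will repeat the computation at the end of the proof of \Cref{prop:bounded-aspect-residual}, so the appendix is self-contained. Use $c(x)=2\arsinh(\sin(\pi x/2))$ and substitute $\theta=\pi x/2$. This gives
\[
I=\int_0^1 c(x)\dd x=\frac4\pi\int_0^{\pi/2}\arsinh(\sin\theta)\dd\theta .
\]
Introduce the parameter integral $K(a)=\int_0^{\pi/2}\arsinh(a\sin\theta)\dd\theta$ for $a\in[0,1]$. Then $K(0)=0$, and differentiation under the integral is justified because the integrand is smooth in $(a,\theta)$ on a compact set. This gives
\[
K'(a)=\int_0^{\pi/2}\frac{\sin\theta}{\sqrt{1+a^2\sin^2\theta}}\dd\theta .
\]

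The main step is evaluating $K'(a)$, which is the only place with real content. Substitute $u=\cos\theta$. Then $1+a^2\sin^2\theta=(1+a^2)-a^2u^2$, so
\[
K'(a)=\int_0^1\frac{du}{\sqrt{(1+a^2)-a^2u^2}}=\frac1a\arcsin\frac{a}{\sqrt{1+a^2}}=\frac{\arctan a}{a}.
\]
Therefore
\[
K(1)=\int_0^1\frac{\arctan a}{a}\dd a .
\]
Expand $\arctan a=\sum_{n\ge0}(-1)^n a^{2n+1}/(2n+1)$ and integrate termwise. Termwise integration is justified by Abel's theorem, or by the uniform bound on alternating partial sums on $[0,1]$. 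This yields $\sum_{n\ge0}(-1)^n/(2n+1)^2=G_{\mathrm{Cat}}$. Hence $I=4G_{\mathrm{Cat}}/\pi$, and $\alpha_0=c(1)-I$ has the stated closed form.
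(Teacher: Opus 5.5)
Your proposal is correct and follows the paper's proof: $c(1)=2\arsinh(1)=\log(3+2\sqrt2)$, then $I=\frac4\pi K(1)$ with $K'(a)=\arctan(a)/a$, and termwise integration of the arctangent series to reach $G_{\mathrm{Cat}}$. Beyond the paper's version, you also give the substitution $u=\cos\theta$ that evaluates $K'(a)$, and you justify both the differentiation under the integral and the termwise integration; the paper leaves these steps implicit.
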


\begin{proof}
First, $c(1)=2\arsinh(1)=\log(3+2\sqrt{2})$. Second,
\[
  I=\int_0^1 c(x)\dd x
  =
  \frac4\pi\int_0^{\pi/2}\arsinh(\sin\theta)\dd\theta.
\]
Let $K(u)=\int_0^{\pi/2}\arsinh(u\sin\theta)\dd\theta$. Then
\[
  K'(u)
  =
  \int_0^{\pi/2}
  \frac{\sin\theta}{\sqrt{1+u^2\sin^2\theta}}\dd\theta
  =
  \frac{\arctan u}{u},
\]
with limiting value $1$ at $u=0$. Therefore
\[
  K(1)=\int_0^1\frac{\arctan u}{u}\dd u
  =
  \sum_{n\ge0}\frac{(-1)^n}{(2n+1)^2}
  =
  G_{\mathrm{Cat}}.
\]
Thus $I=4G_{\mathrm{Cat}}/\pi$, and the formula for $\alpha_0=c(1)-I$ follows.
\end{proof}

\begin{proposition}\label{prop:app-old-balancing}
Under the hypotheses of \Cref{thm:balancing},
\[
  \log\tau(a,b)-\log\tau(A,B)
  \ge
  \alpha_0\bigl((A+B)-(a+b)\bigr).
\]
If the rectangles are distinct, the inequality is strict.
\end{proposition}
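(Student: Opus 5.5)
The plan is to follow the proof of \Cref{thm:balancing} verbatim, substituting \Cref{cor:app-C-balancing} for the trapezoidal estimate \Cref{lem:trap}. First dispose of the trivial case. If $t=a/A=1$, then the rectangles coincide and both sides vanish. So assume $t>1$, in which case $b>A$ as noted in the proof of \Cref{thm:exact-gain}.

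The exact gain identity \eqref{eq:exact-gain} then gives
\[
  \log\frac{\tau(a,b)}{\tau(A,B)}=(t-1)(AC_b-bC_A)+\Gamma,
  \qquad \Gamma>0.
\]
Apply \Cref{cor:app-C-balancing} with $r=A$ and $s=b$, which is allowed since $1\le A\le b$:
\[
  \frac{C_b}{b}-\frac{C_A}{A}\ge\frac{b-A}{Ab}\alpha_0.
\]
Multiplying by $Ab$ gives $AC_b-bC_A\ge\alpha_0(b-A)$. Since $t-1>0$, it follows that
\[
  \log\frac{\tau(a,b)}{\tau(A,B)}\ge\alpha_0(t-1)(b-A)+\Gamma.
\]
Using $a=tA$ and $B=tb$, we have $(t-1)(b-A)=tb-b-tA+A=(A+B)-(a+b)$, which yields the claimed inequality.

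For strictness, note that distinct rectangles force $t>1$, and then $\Gamma>0$ by \Cref{thm:exact-gain}. This makes the inequality strict. Note that $\alpha_0>0$ by \Cref{lem:app-alpha-zero}, although positivity of $\alpha_0$ is not actually needed, since $(t-1)(b-A)\ge0$ in any case.

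There is no genuine obstacle here. All of the analytic content sits in \Cref{lem:app-grid-insertion}, which is already established, and the only point to check is that its hypotheses hold for $g=c$. This is \Cref{lem:c}, which gives that $c$ is concave with $c(0)=0$. Alternatively, one could observe that $\alpha_0<\alpha$ and deduce the statement directly from \Cref{thm:balancing}. The point of the appendix route, however, is to avoid \Cref{lem:trap}, so the proof should be written via \Cref{cor:app-C-balancing} as above.
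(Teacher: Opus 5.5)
Your proposal is correct and matches the paper's proof essentially line for line. Both use the exact gain identity from \Cref{thm:exact-gain}, the bound $AC_b-bC_A\ge\alpha_0(b-A)$ from \Cref{cor:app-C-balancing} with $r=A$, $s=b$, the identity $(t-1)(b-A)=(A+B)-(a+b)$, and strictness from $\Gamma>0$. Your side remark is also valid: since $\alpha_0<\alpha$ and $(A+B)-(a+b)\ge 0$, the statement follows directly from \Cref{thm:balancing}.
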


\begin{proof}
If $t=1$ in the notation of \Cref{thm:exact-gain}, the rectangles are identical. Assume $t>1$. Then \Cref{thm:exact-gain} gives
\[
  \log\frac{\tau(a,b)}{\tau(A,B)}
  =
  (t-1)(AC_b-bC_A)+\Gamma
\]
with $\Gamma>0$. Applying \eqref{eq:app-C-balancing} with $r=A$ and $s=b$ gives
\[
  AC_b-bC_A
  =
  Ab\left(\frac{C_b}{b}-\frac{C_A}{A}\right)
  \ge
  \alpha_0(b-A).
\]
Since $(t-1)(b-A)=(A+B)-(a+b)$, the claimed inequality follows. Strictness for distinct rectangles follows from $\Gamma>0$.
\end{proof}

\end{document}